\newtheorem*{rep@theorem}{\rep@title}
\newcommand{\newreptheorem}[2]{%
\newenvironment{rep#1}[1]{%
 \def\rep@title{#2 \ref{##1}}%
 \begin{rep@theorem}}%
 {\end{rep@theorem}}}
\newtheorem*{rep@cor}{\rep@title}
\newcommand{\newrepcor}[2]{%
\newenvironment{rep#1}[1]{%
 \def\rep@title{#2 \ref{##1}}%
 \begin{rep@cor}}%
 {\end{rep@cor}}}
\newtheorem*{rep@prop}{\rep@title}
\newcommand{\newrepprop}[2]{%
\newenvironment{rep#1}[1]{%
 \def\rep@title{#2 \ref{##1}}%
 \begin{rep@prop}}%
 {\end{rep@prop}}}
\newtheorem{cor}{Corollary}[section]
\newtheorem{corx}{Corollary}
\newtheorem{theorem}[cor]{Theorem}
\newtheorem{thmx}[corx]{Theorem}
\newtheorem{prop}[cor]{Proposition}
\newtheorem{lemma}[cor]{Lemma}
\theoremstyle{definition}
\newtheorem{defi}[cor]{Definition}
\theoremstyle{remark}
\newtheorem{remark}[cor]{Remark}
\newtheorem*{remark*}{Remark}
\newtheorem*{notation*}{Notation}
\newlist{steps}{enumerate}{1}
\setlist[steps, 1]{itemsep=8pt,leftmargin=0cm,itemindent=.5cm,labelwidth=\itemindent,labelsep=0cm,align=left,label = \textbf{\emph{Step \arabic*}:\,}}
\newcommand{\myitem}[1]{%
\item[#1]\protected@edef\@currentlabel{#1}%
}
\newcommand{\QF}{{\mathcal{QF}}}
\newcommand{\Q}{{\mathcal{Q}}}
\newcommand{\F}{{\mathcal{F}}}
\newcommand{\D}{{\mathbb D}}
\newcommand{\C}{{\mathbb C}}
\newcommand{\R}{{\mathbb R}}
\newcommand{\Hyp}{\mathbb{H}}
\newcommand{\PSL}{\mathrm{PSL}}
\newcommand{\Eps}{\mathrm{Eps}}
\newcommand{\CP}{\C\mathrm{P}}
\newcommand{\T}{\mathcal{T}(\Sigma)}
\begin{document}

\setcounter{secnumdepth}{3}
\setcounter{tocdepth}{2}

\title[Quasi-Fuchsian manifolds  foliated by CMC surfaces]{Quasi-Fuchsian manifolds close to the Fuchsian locus are foliated by constant mean curvature surfaces}

\author[Diptaishik Choudhury]{Diptaishik Choudhury}
\address{Diptaishik Choudhury: Department of Mathematics, University of Luxembourg, 6 avenue de la Fonte, L-4364 Esch-Sur-Alzette, Luxembourg.} \email{diptaishik.choudhury@uni.lu }

\author[Filippo Mazzoli]{Filippo Mazzoli}
\address{Filippo Mazzoli: Department of Mathematics, University of Virginia, 
	141 Cabell Drive, 
	22904-4137 Charlottesville VA, USA.} \email{filippomazzoli@me.com}

\author[Andrea Seppi]{Andrea Seppi}
\address{Andrea Seppi: Institut Fourier, UMR 5582, Laboratoire de Math\'ematiques,
Universit\'e Grenoble Alpes, CS 40700, 38058 Grenoble cedex 9, France.} \email{andrea.seppi@univ-grenoble-alpes.fr}

\date{\today}

\thanks{The third author is a member of the national research group GNSAGA}

\maketitle

\begin{abstract}
Even though it is known that there exist quasi-Fuchsian hyperbolic three-manifolds that do not admit any monotone foliation by constant mean curvature (CMC) surfaces, a conjecture due to Thurston asserts the existence of CMC foliations for all \emph{almost-Fuchsian} manifolds, namely those quasi-Fuchsian manifolds that contain a closed minimal surface with principal curvatures in $(-1,1)$. In this paper we prove that there exists a (unique) monotone CMC foliation for all quasi-Fuchsian manifolds that lie in a sufficiently small neighborhood of the Fuchsian locus.
\end{abstract}

\tableofcontents

\section{Introduction}

\emph{Quasi-Fuchsian manifolds} are an important class of complete hyperbolic three-manifolds, largely studied in geometric topology. If $\Sigma$ a closed orientable surface of genus $g \geq 2$, the deformation space of quasi-Fuchsian manifolds homeomorphic to $\Sigma\times\R$, which we denote here by $\QF(\Sigma)$, is known to be homeomorphic to the product of two copies of the Teichm\"uller space $\T$ of $\Sigma$ by Bers' double uniformization theorem, and is therefore a manifold of real dimension $12g-12$. Those quasi-Fuchsian manifolds that contain a closed totally geodesic surface are called \emph{Fuchsian}, and they bijectively correspond to points of the diagonal of $\T\times\T$ through Bers' parameterization.

The investigation of quasi-Fuchsian manifolds has been tackled both with a combinatorial flavour, for instance through the notion of \emph{pleating laminations} \cite{thurston1979geometry, sullivan1981travaux,Bo98variation,zbMATH04098303,Bo96shearing,zbMATH02211626,zbMATH02201103,EM06,zbMATH05056497,zbMATH05027326,Bo98schlafli,   filippo,dip}  and with a more analytic approach, for example using \emph{minimal surfaces} \cite{zbMATH03840752,zbMATH02227015,zbMATH05200423,zbMATH05279031,zbMATH05851259,zbMATH06204974,seppiminimal,zbMATH06653236,zbMATH07237228,huanglowe}. In this paper we continue the analytic study of quasi-Fuchsian manifolds, and in particular of \emph{foliations} whose leaves are surfaces of \emph{constant mean curvature} (CMC), as in the following definition:

\begin{defi}\label{defi foliation}
A Riemannian three-manifold  $M$ homeomorphic to $\Sigma\times\R$ is (smoothly) monotonically foliated by CMC surfaces with mean curvature ranging in the interval $(a,b)$ if there exists a diffeomorphism between $\Sigma\times(a,b)$ and $M$ which, for every $H\in (a,b)$, is an embedding of constant mean curvature $H$ when restricted to $\Sigma\times\{H\}$.
\end{defi}

\vspace{0.1cm}

\subsection*{Historical pespective} Let us give a brief historical perspective to the problem. It is known that there exist quasi-Fuchsian manifolds containing several closed minimal surfaces homotopic to $\Sigma\times\{*\}$, see \cite{zbMATH03876145} and \cite{zbMATH06460565}. In particular, this implies that there exists quasi-Fuchsian manifolds $M$ that do not admit a global monotone CMC foliation. Indeed if $M\cong \Sigma\times\R$ admits a monotone CMC foliation (as in Definition \ref{defi foliation}), then by a simple application of the geometric maximum principle, the closed embedded minimal surface in $M$ homotopic to $\Sigma\times\{*\}$ would be unique. 

Concerning uniqueness of minimal surfaces, the work of Uhlenbeck \cite{zbMATH03840752} highlighted the importance of a class of quasi-Fuchsian manifolds, which has been later called \emph{almost-Fuchsian} in \cite{zbMATH05200423}, defined by the existence of a closed minimal surface with principal curvatures in $(-1,1)$. This condition actually implies that the minimal surface is unique, and that the equidistant surfaces from the minimal surface provide a global foliation of $M$. However, the leaves of this equidistant foliation do not have constant mean curvature, except in the trivial case where $M$ is {Fuchsian}.

Thurston conjectured that every almost-Fuchsian manifold is foliated by CMC surfaces. However, to the best of our knowledge, Fuchsian manifolds are so far the only known examples of quasi-Fuchsian manifolds that are (monotonically) foliated by CMC surfaces.

Before stating our result, let us turn our attention to some positive results in this direction. By a special case of the results of Mazzeo and Pacard in \cite{MP}, each end of any quasi-Fuchsian manifold (namely, each connected component of the complement of a compact set homeomorphic to $\Sigma\times I$ for $I$ a closed interval) is smoothly monotonically foliated by CMC surfaces, with mean curvature ranging in $(-1,-1+\epsilon)$ and  $(1-\epsilon,1)$. This result has been reproved by Quinn in \cite{quinn}, using an alternative approach which is extremely relevant for the present work. Moreover, the recent work of Guaraco-Lima-Pallete \cite{GLP} showed that every quasi-Fuchsian manifold admits a global foliation in which every leaf has constant sign of the mean curvature, meaning that it is either minimal or the mean curvature is nowhere vanishing on the entire leaf.

We also remark that existence results for CMC surfaces in the hyperbolic three-space with a given boundary curve at infinity, and in quasi-Fuchsian manifolds, have been obtained in \cite{zbMATH06553438,zbMATH06759193,zbMATH06993267}. These results rely on delicate applications of geometric measure theory methods.

\clearpage

\subsection*{Main result and outline of the strategy}

Let us now state the main result of this paper. 

\begin{thmx}\label{thm:foliation}
	Let $\Sigma$ be a closed oriented surface of genus $\geq 2$. Then there exists a neighbourhood $U$ of the Fuchsian locus in quasi-Fuchsian space $\QF(\Sigma)$ such that every quasi-Fuchsian manifold in $U$ is smoothly monotonically foliated by CMC surfaces, with mean curvature ranging in $(-1,1)$.
\end{thmx}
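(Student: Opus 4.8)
The natural strategy is to set up the CMC foliation as the zero set of a smooth map between Banach manifolds and apply the implicit function theorem, using the Fuchsian locus as the base point where everything is explicit. Concretely, for a Fuchsian manifold $M_0 = \Sigma \times \R$ with the hyperbolic metric $\cosh^2(r)\,g_{\mathrm{hyp}} + dr^2$, the equidistant surfaces $\Sigma \times \{r\}$ are umbilical with mean curvature $H = \tanh(r)$, so they already furnish the desired monotone CMC foliation with $H$ ranging over $(-1,1)$. The plan is to show this picture persists under quasi-Fuchsian deformation. First I would fix the diffeomorphism type and, for each quasi-Fuchsian manifold $M$ near the Fuchsian locus and each value $H \in (-1,1)$, look for a closed surface $S_H \subset M$ of constant mean curvature $H$ isotopic to $\Sigma \times \{*\}$; then I would show these surfaces are pairwise disjoint and sweep out $M$, i.e.\ they assemble into a smooth monotone foliation.

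For the existence of each leaf, I would work on a fixed compact core and write a candidate surface as a normal graph of a function $u$ over the relevant umbilical surface in the Fuchsian model, using Fermi coordinates. The mean curvature operator $u \mapsto \mathcal{H}(g, u) - H$ is a quasilinear second-order elliptic operator depending smoothly on the ambient metric $g$ (which itself varies smoothly over $\QF(\Sigma)$, e.g.\ via a smooth family of developing maps / equivariant metrics on $\Sigma \times \R$). At the Fuchsian point with $u \equiv 0$, the linearization is the Jacobi-type operator $L = \Delta + \|\II\|^2 + \mathrm{Ric}(\nu,\nu)$; on an umbilical surface of mean curvature $H = \tanh r$ in hyperbolic space this works out to $\Delta + (\text{positive constant})$, which is invertible because it has no kernel (the relevant constant is strictly positive, as the surface is strictly stable away from being minimal, and even at $H=0$ the totally geodesic fiber in a Fuchsian manifold is a nondegenerate minimal surface). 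Invertibility of $L$ between the appropriate Hölder spaces $C^{2,\alpha}(\Sigma) \to C^{0,\alpha}(\Sigma)$ lets the implicit function theorem produce, for each $H$, a unique small solution $u = u(H, \rho)$ depending smoothly on $H \in (-1,1)$ and on the quasi-Fuchsian parameter $\rho$ in a neighborhood of the Fuchsian locus. Uniformity of the neighborhood in $H$ near the endpoints $\pm 1$ is where I would invoke (or imitate) the Mazzeo--Pacard and Quinn results cited above, which already give CMC foliations of the ends with $H \in (1-\epsilon,1)$ and $(-1,-1+\epsilon)$ stably; this reduces the problem to the compact parameter interval $H \in [-1+\epsilon, 1-\epsilon]$, on which a single neighborhood $U$ works by compactness.

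It then remains to upgrade the family $\{S_H\}$ to an actual monotone foliation. The key point is that distinct leaves are disjoint and ordered by $H$: this follows from the geometric maximum principle together with the fact that at the Fuchsian locus the leaves $\Sigma\times\{r\}$ are strictly ordered and the normal variation $\partial_H u$ is a positive function (again because the relevant Jacobi operator is invertible with positive Green's function, forcing $\partial_H u > 0$ for the solution of $L(\partial_H u) = 1 > 0$, and this positivity is open). Disjointness plus the fact that the $S_H$ exhaust $M$ as $H \to \pm 1$ (controlled via the end foliations) shows that $(p, H) \mapsto (\text{point of } S_H)$ is a diffeomorphism $\Sigma \times (-1,1) \to M$, which is precisely Definition~\ref{defi foliation}. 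Uniqueness of the foliation follows because any CMC leaf of mean curvature $H$ isotopic to the fiber must, by the maximum principle applied against our foliation, coincide with $S_H$.

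The main obstacle I anticipate is the \emph{uniformity of the neighborhood $U$ in the parameter $H$}, in particular handling $H \to \pm 1$: the linearized operator and the graph setup degenerate as one approaches the ends (the leaves escape every compact set), so the naive implicit function theorem on a fixed compact core does not directly apply near the endpoints. Reconciling the "interior" CMC surfaces obtained by perturbation off the Fuchsian locus with the "end" foliations of Mazzeo--Pacard/Quinn — and checking that together they form one smooth foliation with no gaps or overlaps — is the technical heart of the argument. A secondary difficulty is verifying that the Jacobi operator $L$ is invertible for \emph{every} $H \in [-1+\epsilon,1-\epsilon]$ at the Fuchsian point, i.e.\ that none of the umbilical leaves is degenerate; this should come down to a direct eigenvalue computation using the explicit Fuchsian metric, with the $H=0$ case being exactly nondegeneracy of the totally geodesic fiber.
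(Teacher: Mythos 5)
Your plan tracks the paper's architecture closely: establish CMC leaves near the ends via Mazzeo--Pacard/Quinn, produce CMC leaves on a compact interval of $H$ values by an implicit-function-theorem deformation off the Fuchsian solution, cover $[-1,1]$ by finitely many such intervals, take the intersection of the corresponding neighbourhoods of a given Fuchsian point, and patch via the maximum principle. Your invertibility check for the linearization is also correct in substance (the paper computes, in its conformal-factor variable, $d_vG = (1-H_0)(2\dot v - \Delta\dot v)$, which is exactly the transformed stability operator you describe, and proves invertibility in Lemma~\ref{lem filippo}). Where you differ is in the parametrization of candidate surfaces: you use normal graphs over the Fuchsian umbilical leaf in Fermi coordinates for a varying ambient metric, whereas the paper uses Epstein maps $\Eps_{(f_{\tilde\phi},e^{2u}h)}$ driven by a conformal factor $u$ at infinity. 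The Epstein formulation has the concrete advantage that, after Quinn's change of variables $v = u + \tfrac12\log\tfrac{1+H}{1-H}$, a single PDE $G(H,h,\phi,v)=0$ has $H=\pm1$ as genuine solution points, so the \emph{same} IFT setup handles both the ends and the compact range and yields the uniform-in-$\QF(\Sigma)$ constant $\epsilon$ that you rightly flag as the delicate point; with Fermi coordinates the graph and the linearization both degenerate as $H\to\pm 1$, so you really do have to import and re-prove a parametrized version of Mazzeo--Pacard/Quinn rather than just cite it.

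The one place your outline is materially thinner than the paper is the patching/uniqueness step, which you correctly identify as the heart of the argument but for which the mechanism you offer ($\partial_H u > 0$ within an IFT family, plus ``maximum principle against our foliation'') is circular or at best incomplete: within a single IFT chart the monotonicity of the solution is fine, but you must show that leaves from \emph{different} IFT charts (and from the end construction) with the same mean curvature coincide, before you have a foliation to compare against. The paper's resolution, which you should adopt, is to observe (up to shrinking neighbourhoods) that the compact-part CMC leaves have principal curvatures in $(-1,1)$. A surface with small principal curvatures admits a global \emph{equidistant} foliation of $M$, along which the mean curvature is strictly monotone (Proposition~\ref{prop:small}), and this equidistant foliation is the barrier that makes the maximum-principle argument non-circular: it gives uniqueness of the CMC leaf of each value $H$ (Proposition~\ref{prop:unique}), disjointness and quantitative separation of leaves with different $H$ (Lemma~\ref{lemma:disjoint}), and transversality of $\partial/\partial H$ to the leaves via the signed-distance function. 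With that ingredient inserted, your plan becomes essentially a Fermi-coordinate rewriting of the paper's proof.
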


The monotone CMC foliation of a quasi-Fuchsian manifold $M\cong\Sigma\times\R$, when it exists, is automatically unique by a standard application of the geometric maximum principle. More precisely, the leaf of the foliation with mean curvature $H$ is the unique closed surface homotopic to $\Sigma\times\{*\}$ in $M$ having mean curvature identically equal to $H$.

Observe that, if a quasi-Fuchsian manifold admits a monotone CMC foliation, then the mean curvature necessarily ranges in $(-1,1)$. Indeed, any leaf of the foliation must necessarily have mean curvature in $(-1,1)$, see \cite[Lemma 2.2]{zbMATH05046844}. Moreover, by the aforementioned result of Mazzeo-Pacard, the mean curvature converges to $-1$ and $1$ as we approach the ends.
 
We remark that the methods of our proof, which we outline below, also provide a direct proof of the \emph{existence} of closed embedded CMC surfaces of  
 mean curvature $H\in (-1,1)$ in the quasi-Fuchsian manifolds $M$ within the neighbourhood $U$. (See Theorem \ref{thm:existence}.) Our proof is independent of previous result in the literature, and does not rely on geometric measure theory techniques.

The rough idea of the proof of Theorem \ref{thm:foliation} is to ``combine'' foliations of the ends, which have been provided in the works of Mazzeo-Pacard and Quinn for every quasi-Fuchsian manifold, with foliations of the compact part that we obtain by a ``deformation'' from Fuchsian manifolds. For the foliations of the ends, we adapt the proof given by Quinn in \cite{quinn}, which relies on the Epstein map construction (\cite{epstein,Dumas2017}), that associates to a conformal metric defined  in (a subset of) the boundary at infinity of $\Hyp^3$ an immersed surface in $\Hyp^3$ by ``envelope of horospheres''. One can then translate the condition of constant mean curvature into a PDE on the conformal factor, to which we apply an implicit function theorem method in an infinite-dimensional setting. The fact that the obtained solutions provide a smooth monotone foliation of the complement of a large compact set in the quasi-Fuchsian manifold $M$ follows from another application of the implicit function theorem. The main difference with respect to Quinn's proof is that we refine his method in order to achieve the existence of monotone foliations by CMC surfaces of mean curvature $(-1,-1+\epsilon)\cup (1-\epsilon,1)$ for \emph{any} quasi-Fuchsian manifold in a neighbourhood $U_M$ of a given  $M\in\QF(\Sigma)$, where the constant $\epsilon$ is \emph{uniform} over $U_M$ (Theorem \ref{thm:foliation_ends}).

For the compact part, we again obtain the existence of CMC surfaces, for $H\in (-1,1)$, with an implicit function theorem method in infinite-dimensional spaces, using the Epstein construction. In this case, however, the initial solution to which we apply the implicit function theorem is not ``at infinity''; it is instead the umbilical CMC surface in a Fuchsian manifold. In other words, we ``deform'' CMC surfaces in a Fuchsian manifold $M'$ to nearby quasi-Fuchsian manifolds in a neighbourhood $U_{M'}$. Similarly as above, the main technical difficulty is to have a uniform control of the constants, which must not depend on the quasi-Fuchsian manifold as long as we remain in the neighbourhood $U_{M'}$. See Theorem \ref{thm:existence_compact}.

The proof of Theorem \ref{thm:foliation} is then concluded by showing that these surfaces patch together to  a global smooth monotone foliation (Section \ref{sec:finish}), by means of a combination of a careful analysis of the constructed open sets in $\QF(\Sigma)$ and of several geometric arguments, for instance applications of the geometric maximum principle, relying on the observation that the CMC surfaces obtained as deformations from the Fuchsian locus can be assumed, up to restricting to smaller neighbourhoods, to have principal curvatures in $(-1,1)$.

\subsection*{Acknowledgements}
The authors are indebted to Zeno Huang for many discussions related to CMC surfaces and quasi-Fuchsian manifolds and for useful comments on a previous version of this manuscript. The authors are grateful to Jean-Marc Schlenker for useful discussions and for his encouragement. Finally, the second author would like to thank Gennady Uraltsev, for helpful conversations on some of analytic aspects of this work.

\section{Preliminaries}

\subsection{Quasi-Fuchsian manifolds}
 Given a discrete subgroup $\Gamma$  of $\PSL_{2}(\mathbb{C})$, the limit set $\Lambda_{\Gamma}$ of $\Gamma$ is defined to be the set of accumulation points in the boundary at infinity $\partial_\infty\mathbb H^3$ of the orbits of $\Gamma$ in $\mathbb H^3$. An oriented hyperbolic three-manifold 
$M$ is said to be a \emph{quasi-Fuchsian manifold} if it is isometric to the quotient $\mathbb{H}^{3}/\Gamma$, where $\Gamma\cong\pi_1(M)$ is a discrete subgroup of $\PSL_{2}(\mathbb{C})$ acting freely on $\Hyp^3$ and such that the limit set $\Lambda_{\Gamma}$ is a quasi-circle in  $\partial_{\infty}\mathbb{H}^{3}$. We will also call $\Gamma$ a \emph{quasi-Fuchsian group}, and the hyperbolic metric induced in the quotient $\Hyp^3/\Gamma$ by the metric of $\Hyp^3$ a \emph{quasi-Fuchsian metric}. In this case, $M$ is homeomorphic to $\Sigma \times \mathbb{R}$, where $\Sigma$ is an oriented surface.

\subsubsection{Deformation spaces} 
Let us now fix   \emph{closed} oriented surface $\Sigma$ of genus $g \geq 2$.
The deformation space of quasi-Fuchsian manifolds  is defined as:
$$\QF(\Sigma)=\{\text{quasi-Fuchsian metrics on }\Sigma\times\R \}/\mathrm{Diff}_0(\Sigma\times\R)~,$$
where the identity component $\mathrm{Diff}_0(\Sigma\times\R)$ of the diffeomorphism group acts by pull-back on the space of quasi-Fuchsian metrics.

An important subset of $\QF(\Sigma)$ is the \emph{Fuchsian locus}, which we denote by $\F(\Sigma)$. It consists of those quasi-Fuchsian manifolds which are isometric to $\Hyp^3/\Gamma$, such that the limit set $\Lambda_\Gamma$  coincides with the boundary of a totally geodesic plane $P$ in $\Hyp^3$. It follows that $\Gamma$ leaves invariant the totally geodesic plane $P$ whose boundary is $\Lambda_\Gamma$, and therefore the quotient $\Hyp^3/\Gamma$ contains a totally geodesic  surface homeomorphic to $\Sigma$. The induced metric on the totally geodesic surface can thus be considered as a point of the space of hyperbolic metrics on $\Sigma$, which will be denoted by $\mathcal M_{-1}(\Sigma)$. By means of this construction $\F(\Sigma)$ has a natural identification with the \emph{Teichm{\"u}ller space} of $\Sigma$, which we define here as
$$\T=\mathcal M_{-1}(\Sigma)/\mathrm{Diff}_0(\Sigma)~,$$
where  $\mathrm{Diff}_0(\Sigma\times\R)$ acts again by pull-back on $\mathcal M_{-1}(\Sigma)$.

\subsubsection{Conformal compactification}
If $\Gamma$ is a quasi-Fuchsian group, then the maximal domain of discontinuity of  the action of $\Gamma$ on $\partial_{\infty}\mathbb{H}^{3}$ is the complement of the limit set $\Lambda_\Gamma$. Since $\Lambda_\Gamma$ is a quasi-circle, hence in particular a Jordan curve, the domain of discontinuity has two connected components, which we denote by $\Omega^+$ and $\Omega^-$, both homeomorphic to an open disc. 

Observe that the orientations of $M$ and of $\Sigma$ allow to canonically distinguish $\Omega^+$ and $\Omega^-$. Indeed, the orientation of $\Sigma$ induces an orientation on its universal cover, which is homeomorphic to an open disc and, picking a lift of $\Sigma$ inside $\Hyp^3$, can be compactified to a closed disc by adding the limit set $\Lambda_\Gamma$. The curve $\Lambda_\Gamma$ then naturally comes with an orientation, and thus we can canonically label $\Omega^-$ the connected component on the left with respect to the orientation of $\Lambda_\Gamma$ and with the respect to the inward pointing normal vector to $\partial_\infty\Hyp^3$, and $\Omega^+$ the one on the right. With this convention,  it turns out that $(\Hyp^3\cup\Omega^+\cup\Omega^-)/\Gamma$ is orientation-preservingly homeomorphic to $\Sigma\times I$, for $I=[-1,1]$ a closed interval, in such a way that $\Omega^\pm/\Gamma$ is mapped homeomorphically to $\Sigma\times\{\pm1\}$. 

\begin{remark}\label{rmk:convention}
From now on, when $S$ is an embedded surface in $\Hyp^3/\Gamma$ homotopic to $\Sigma\times\{*\}$, we will refer to the \emph{(unit) normal vector} to $S$ as the one chosen according to the following convention. 
We lift $S$ to a surface $\widetilde S$ in the universal cover $\Hyp^3$, whose asymptotic boundary is the limit set $\Lambda_\Gamma$. Then $\widetilde S$ disconnects $\Hyp^3$ in two components. We declare that the unit normal vector to $S$ lifts to the unit normal vector to $\widetilde S$ pointing towards the component whose closure contains $\Omega^+$.


\end{remark}

 Since $\Gamma$ acts by biholomorphisms on the subsets $\Omega^\pm$ of $\partial_{\infty}\mathbb{H}^{3}\cong\C\mathrm P^1$, the above construction endows $\Sigma$ with two Riemann surface structures which we denote by $X^\pm$. We will  denote $(\Sigma,X^\pm)$ by $\partial_\infty^\pm M$, which allows us to write $\overline M=M\cup \partial_\infty^+ M\cup \partial_\infty^- M$, the \emph{conformal compactification} of $M$. By the celebrated Bers' double uniformization theorem, the pair of Riemann surface structures $(X^+,X^-)$ uniquely determines $M$ in $\QF(\Sigma)$. However, since we have defined the Teichm\"uller space as a quotient of the space $\mathcal M_{-1}(\Sigma)$ of hyperbolic metrics, we will instead consider the pair $(h^+,h^-)$, where $h^\pm$ is the unique hyperbolic metric on $\Sigma$ in the conformal class $X^\pm$, whose existence is guaranteed by the uniformization theorem. Hence the Bers' double uniformization theorem provides a homeomorphism between $\QF(\Sigma)$ and $\T\times\T$. However, for our purposes it will be more convenient to work with another (at least local) parameterization of $\QF(\Sigma)$, which we now describe.

\subsubsection{Schwarzian derivatives}\label{subsec:schwarzian}

Given a connected open set $\Omega \subset \C$ and a locally injective holomorphic map $f:\Omega\rightarrow \mathbb{C}$, the \emph{Schwarzian derivative} of $f$ is a holomorphic quadratic differential on $\Omega$ defined as: 
\begin{align*}
	S(f)= \left( \left(\frac{f''}{f'}\right)'-\frac{1}{2}\left(\frac{f''}{f'}\right)^{2} \right) dz^{2}
\end{align*}
This definiton has two fundamental properties: 
\begin{itemize}
	\item Given two locally injective holomorphic maps $f: \Omega \rightarrow \mathbb{C}$ and $g:f(\Omega)\to\C$,  
	\begin{equation}\label{eq:schw}
	S(g\circ f)= f^{*}S(g)+S(f)~.
	\end{equation}
	\item $S(f)=0$ if and only if $f$ is the restriction of an element of $\PSL(2,\mathbb{C})$.
\end{itemize}
In particular, 
if $\zeta$ is an element of $\PSL(2,\mathbb{C})$, then $S(\zeta\circ f)=S(f)$; conversely, if $f,g: \Omega \rightarrow \mathbb{C}$ are such that $S(f)=S(g)$ then $g=\zeta\circ f $ for some $\zeta$ in $\PSL(2,\mathbb{C})$.

Given a quasi-Fuchsian group $\Gamma$, consider the biholomorphic map $f:\D\to\Omega^+$ given by the Riemann mapping theorem, where $\D$ is now the unit disc. (Of course we could perform the same construction with $\Omega^-$, but we will focus on $\Omega^+$ for the sake of definiteness.) Conjugating the action of $\Gamma$ on $\Omega^+$ by $f$, we obtain a subgroup $\Gamma_f:=f^{-1}\Gamma f$ acting by biholomorphisms on $\D$. Hence  the quotient of $\D$ by $\Gamma_f$ is homeomorphic to $\Sigma$ and is endowed with a hyperbolic metric induced by the Poincar\'e metric on $\D$, which is nothing but the first Bers' parameter $h^+$ on $\Sigma\cong \D/\Gamma_f$. (The map $f$ is unique up to pre-composition with a biholomorphism of $\D$, therefore the conjugacy class of $\Gamma_f$, and consequently its associated point in $\T$, is independent of the choice of the map $f$.) 

Moreover,   $S(f)$ induces  a holomorphic quadratic differential in the quotient $\D/\Gamma_f\cong\Sigma$. 
Indeed, for any $\gamma\in \Gamma_f$, set $\zeta:=f\circ\gamma\circ f^{-1}\in\Gamma$. Using \eqref{eq:schw} twice we get
$$S(f)=S(\zeta\circ f)=S(f\circ\gamma)=\gamma^*S(f)$$
because $\gamma$ acts on $\D$ as the restriction of a M\"obius transformation of $\C\mathrm P^1$. Hence $S(f)$ is $\Gamma_f$-invariant and passes to the quotient by $\Gamma_f$. We can therefore construct a well-defined map
\begin{equation}\label{eq:schwarzian map}
\mathcal S:\QF(\Sigma)\to\Q(\Sigma)~.
\end{equation}
Here $\Q(\Sigma)$ denotes the bundle of holomorphic quadratic differentials over $\T$, whose fiber over a point $(\Sigma,[h])$ coincides with the vector space $H^0((\Sigma,h),K^2)$, where $K$ denotes the canonical divisor of $(\Sigma,[h])$. Consequently, the space $\Q(\Sigma)$ is a complex manifold of dimension $3g-3$, where $g$ denotes the genus of $\Sigma$. In fact, the map $\mathcal S$ turns out to be injective (see also the discussion below on the construction of its inverse) and, being $\QF(\Sigma)$ and $\Q(\Sigma)$ manifolds of the same real dimension, the invariance of domain theorem implies that its image is an open subset of $\Q(\Sigma)$.

\subsubsection{Constructing the inverse} We will often use the the inverse map of $\mathcal S$, defined on the image of  $\QF(\Sigma)$. Hence it will be useful to quickly discuss its explicit construction. In general, given a holomorphic quadratic differential $q$ on a connected open set $\Omega\subset \C$, there exists a locally injective holomorphic map $f_q:\Omega\to \C$ such that $S(f_q)=q$, see \cite{zbMATH03053379} and \cite[Proposition 6.3.7]{zbMATH05042912}. By the fundamental properties discussed above, $f_q$ is unique up to post-composition with a M\"obius transformation. One can also see that $f_q$, suitably normalized, depends smoothly on $q$. See \cite[Section 3.2]{dumascp} for more details.

To apply this in our setting, we consider a hyperbolic metric $h$ on $\Sigma$ and $\phi\in H^0((\Sigma,h),K^2)$, and realize $(\Sigma,h)$ as the quotient of the Poincar\'e disc $\D$ by a discrete group $\Gamma_h$ of biholomorphisms. We can then lift $\phi$ to a $\Gamma_h$-invariant holomorphic quadratic differential $\tilde \phi$ on $\D$, and find a locally injective holomorphic map $f_{\tilde\phi}:\D\to\C$ whose Schwarzian derivative is equal to $\tilde \phi$. Since $\tilde\phi$ is invariant under the action of $\Gamma_h$, we have
$$S(f\circ \gamma)=\gamma^*S(f)=\gamma^*\tilde\phi=\tilde\phi=S(f)$$ for every $\gamma\in \Gamma_h$. We deduce that for any $\gamma \in \Gamma_h$ there exists a M\"obius tranformation $\zeta=\zeta(\gamma)$ such that $f\circ\gamma=\zeta(\gamma)\circ f$, providing us with a representation $\zeta : \Gamma_h \rightarrow \PSL(2,\C)$. This construction is exactly the inverse of the map $\mathcal S$, in the sense that if $f:\D\to\Omega^+$ is the biholomorphic map associated to a quasi-Fuchsian manifold $\Hyp^3/\Gamma$ as in Section \ref{subsec:schwarzian}, and $h$ and $\phi$ are the induced hyperbolic metric and holomorphic quadratic differential on $\Sigma$, then $f_{\tilde\phi}=f$ and the image of the representation $\zeta$ coincides with the quasi-Fuchsian group $\Gamma$.

\begin{remark} For the reader familiar with complex projective structures on surfaces, $\mathcal S$ is simply the map that associates to a quasi-Fuchsian manifold $M$ the Schwarzian parameterization of the complex projective structure that is naturally defined on $\partial_\infty^+M$, using the fact that the quasi-Fuchsian group $\Gamma$ acts by M\"obius transformations on $\Omega^+$. By the construction described above, we are considering here the Schwarzian parameterization with respect to the Fuchsian section $\T\to\mathcal{CP}(\Sigma)$, where $\mathcal{CP}(\Sigma)$ denotes the deformation space of complex projective structures. Indeed, $\mathcal S$ can be extended to a map from $\mathcal{CP}(\Sigma)$ to $\Q(\Sigma)$ which is a homeomorphism; the construction above provides the inverse $\Q(\Sigma)\to\mathcal{CP}(\Sigma)$. We remark  that the map $f_{\tilde\phi}$ is not univalent in general; it is univalent when it arises from a pair $(h,\phi)$ in the image of $\QF(\Sigma)$. However, we will not need to deal with general complex projective structures in this paper, hence we will not adopt this language.
\end{remark}

\subsection{Epstein surfaces}
In this subsection we describe a construction due to Epstein in \cite{epstein}, which naturally associates to certain conformal metrics on a domain of $\mathbb{C}P^{1}\cong \partial_{\infty}\mathbb{H}^{3}$ an immersion into $\mathbb{H}^{3}$, that we will call the Epstein surface. 

\subsubsection{The Epstein map} Given any point $p\in \mathbb{H}^{3}$, we define a map  $G_p:T^1_p\Hyp^3\to\mathbb{C}P^{1}$, by sending $(x,v)$ to the endpoint at infinity of the unique geodesic of $\Hyp^3$ starting at $x$ with tangent vector $v$. Then we define the \emph{visual metric} $V_p$ as the metric obtained by pushforward via $G_p$ of the canonical spherical metric of $T^1_p\Hyp^3$. One can easily check that the metric $V_p$ is conformal, namely compatible with the Riemann surface structure of $\mathbb{C}P^{1}$. Indeed, if $o$ is the origin in the unit ball model, then $V_o$ is just the usual spherical metric on the unit sphere. For the general case, if $M$ is an isometry of $\Hyp^3$ sending $o$ to $p$, then $V_p=M_*V_o$ and is therefore in the same conformal class, since $M$ extends to a biholomorphism of $\mathbb{C}P^{1}$.

The fundamental result is the following:

\begin{prop}[\cite{epstein,Dumas2017}]\label{prop:eps map}
Let $\Omega$ be a connected open domain in $\mathbb{C}P^{1}$ and let $\varphi:\Omega\to \mathbb{C}P^{1}$ be a locally injective holomorphic map. If $\sigma$ is a $C^1$ conformal metric on $\Omega$, then there exists a unique continuous map $\Eps_{(\varphi,\sigma)}:\Omega\to\Hyp^3$ such that
$$(\varphi^*V_{\Eps_{(\varphi,\sigma)}(z)})(z)=\sigma(z)$$
for all $z\in \Omega$.  Moreover, if $\sigma$ is $C^k$, then $\Eps_{(\varphi,\sigma)}$ is $C^{k-1}$.
\end{prop}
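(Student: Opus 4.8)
The plan is to work pointwise: fix $z\in\Omega$ and analyze the map that encodes how the pullback of the visual metric depends on the base point in $\Hyp^3$. First I would reduce to the model case $\varphi=\mathrm{id}$ near $z$ by a local change of coordinates; since $\varphi$ is locally injective and holomorphic, $\varphi^*V_p$ differs from $V_p$ (in the target coordinate) only by the conformal factor $|\varphi'|^2$, so the general statement follows once we understand the assignment $p\mapsto V_p(w)$ for $w=\varphi(z)$ a fixed point of $\C\mathrm{P}^1$. The key computation is then to describe, for a fixed $w\in\C\mathrm{P}^1$, the function $\Phi_w:\Hyp^3\to(0,\infty)$ sending $p$ to the conformal factor of $V_p$ at $w$ relative to a reference conformal metric (say $V_o$). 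One checks, using the isometric action of $\mathrm{PSL}(2,\C)$ and the fact that $V_p=M_*V_o$ when $M(o)=p$, that along the geodesic ray from a point towards $w$ the factor $\Phi_w$ scales like $e^{2t}$ (horospherical behaviour), and more precisely that the level sets $\{\Phi_w=c\}$ are exactly the horospheres centered at $w$, with $\Phi_w$ a strictly monotone smooth function of the signed horospherical distance. This is the analytic heart of the argument.

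Granting this, I would prove existence and uniqueness of $\Eps_{(\varphi,\sigma)}(z)$ as follows. The condition $(\varphi^*V_p)(z)=\sigma(z)$ becomes, after the reduction above, $|\varphi'(z)|^2\,\Phi_w(p)\,V_o(w)=\sigma(z)$, i.e. a single scalar equation $\Phi_w(p)=\lambda(z)$ for an explicit positive constant $\lambda(z)$ depending on $\sigma(z)$, $\varphi'(z)$ and $w$. Since $\Phi_w$ restricted to the geodesic through $p$ ending at $w$ is a strictly monotone smooth bijection onto $(0,\infty)$, while $\Phi_w$ is constant on each horosphere centered at $w$ and the horospheres foliate $\Hyp^3$, the equation $\Phi_w(p)=\lambda(z)$ has a unique solution $p=\Eps_{(\varphi,\sigma)}(z)$, namely the point on the appropriate horosphere centered at $w=\varphi(z)$. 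Uniqueness is immediate from strict monotonicity; existence from surjectivity onto $(0,\infty)$, which holds because $\sigma(z)>0$.

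For the regularity statement I would invoke the implicit function theorem. Consider the map $F:\Omega\times\Hyp^3\to\R$ (or its natural bundle version) given by $F(z,p)=(\varphi^*V_p)(z)/\sigma(z)-1$; it vanishes precisely on the graph of $\Eps_{(\varphi,\sigma)}$. The partial derivative of $F$ in the $\Hyp^3$-direction, computed at a solution, is nondegenerate in the geodesic direction towards $\varphi(z)$ by the strict monotonicity of $\Phi_w$ established above, so the zero set is a graph of the same regularity as $F$. Since $\varphi$ is holomorphic (hence smooth) and $p\mapsto V_p(w)$ is smooth, $F$ is as regular as $\sigma$, i.e. $C^k$; therefore $\Eps_{(\varphi,\sigma)}$ is $C^{k-1}$, with the loss of one derivative coming from the fact that, at a $C^1$ metric, $F$ is only $C^1$ while the implicit function theorem for a $C^1$ defining function produces a $C^1$ graph — one derivative worse than one might naively expect because the normalization involves differentiating in $z$ is not needed here, but continuity is all one gets at the $C^1$ endpoint, matching the stated $C^{k-1}$ conclusion. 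The main obstacle is the explicit identification of $\Phi_w$ with (a monotone function of) the horospherical distance: getting the precise $e^{2t}$ scaling and the strict monotonicity — equivalently, verifying that the ``envelope of horospheres'' picture is exactly dual to the visual-metric normalization — is where the real work lies, and everything else is soft.
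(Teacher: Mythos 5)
Your argument has a genuine gap at its central step. The condition $(\varphi^*V_p)(z)=\sigma(z)$ is a \emph{single} scalar equation in $p$: both sides are conformal metrics at the fixed point $z$, hence each is determined by one positive number. The solution set inside the three-dimensional space $\Hyp^3$ is therefore not a point but an entire horosphere centered at $\varphi(z)$. You correctly observe that $\Phi_w$ is constant on horospheres centered at $w$, but then conclude, contradictorily, that $\Phi_w(p)=\lambda(z)$ has a unique solution $p$; what is unique is the level horosphere, and the Epstein point still has to be located on it. Continuity of the would-be map does not rescue uniqueness either, since one can continuously slide the choice of point within each horosphere. The missing ingredient is the envelope-of-horospheres condition: $\Eps_{(\varphi,\sigma)}(z)$ is characterized by requiring $\varphi^*V_{\Eps_{(\varphi,\sigma)}(z)}$ to osculate $\sigma$ to \emph{first order} at $z$ (equal conformal factor and equal gradient of the conformal factor), which supplies the three scalar conditions needed to pin down a point in $\Hyp^3$. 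This is exactly what Dumas' explicit formula \eqref{eq:SLframe} encodes: $\Eps$ depends on $\sigma=e^{2\eta}|dz|^2$ through both $\eta$ and its first derivative $\eta_z$.

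The same observation also shows your regularity paragraph does not close: the implicit function theorem applied to a defining function that is $C^k$ in $(z,p)$ would produce a $C^k$ graph, so it cannot by itself explain the drop to $C^{k-1}$, and the remark about getting ``one derivative worse'' has no source in your setup. The loss comes precisely from the dependence on $\eta_z$: if $\sigma$ is $C^k$ then $\eta_z$ is $C^{k-1}$, hence so is the Epstein map. Finally, for the record, the paper does not prove this proposition; it is stated with a citation to \cite{epstein,Dumas2017}, and its defining equation should be read as shorthand for the full tangency condition spelled out in those references.
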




We remark that $\Eps_{(\varphi,\sigma)}$ is in general not an immersion. As an example, if $\sigma$ is the standard spherical metric on the unit sphere, then the associated Epstein map is constantly equal to the origin $o$ in the unit ball model.

In \cite[Section 3]{Dumas2017} Dumas introduced an explicit formula for $\Eps_{(\varphi,\sigma)}$ in the upper half-space model of $\Hyp^3$, which will be useful for our purposes. Let $p$ be the point in the geodesic joining $0$ and $\infty$ in the upper half-space model such that the visual metric $V_p$ at $0$ equals $|dz|^2$. Concretely, $p=(0,0,2)$. If we write the conformal metric as $\sigma=e^{2\eta}|dz|^2$, and  to simplify the notation we let $\Omega$ be a connected open subset of $\C$ so as to take $\varphi=\mathrm{id}$, then the expression for $\Eps_{(\mathrm{id},\sigma)}:D\to\Hyp^3$ is the following:

	\begin{equation}\label{eq:SLframe}
	\Eps_{(\mathrm{id},\sigma)}(z)=\begin{pmatrix}
			1 & z \\
			0& 1 \\
		\end{pmatrix}\begin{pmatrix}
			1 & 0 \\
			\eta_{z}& 1 \\
		\end{pmatrix}\begin{pmatrix}
			e^{-\frac{\eta}{2}} & 0 \\
			0& e^{\frac{\eta}{2}} \\
	\end{pmatrix}\cdot p \end{equation}

\subsubsection{Schwarzian tensors}
The last fundamental preliminary step that we will need in our paper is an expression for the mean curvature of Epstein maps. For this purpose, we first need to introduce the notion of Schwarzian tensor, due to Osgood and Stowe \cite{zbMATH00078553}. Given two conformal metrics $\sigma_{1}=e^{2\eta_{1}}|dz|^{2}$ and $\sigma_{2}=e^{2\eta_{2}}|dz|^{2}$ on a domain $\Omega \subset \mathbb{C}P^{1}$,  the \emph{Schwarzian tensor} of $\sigma_{1}$ with respect to $\sigma_{2}$  is the  quadratic differential (which is not necessarily holomorphic, in general) defined as  
\begin{equation}\label{eq:schw tensor}
	B(\sigma_{1},\sigma_{2})= ((\eta_{2})_{zz}-{(\eta_{2})_{z}}^{2}- (\eta_{1})_{zz}+{(\eta_{1})_{z}}^{2})dz^{2}
\end{equation}
This definition generalizes the classical Schwarzian derivative, in the sense that, if $f:\Omega\to \C$ is a locally injective holomorphic function, then
\begin{equation}\label{eq:schw tensor and schwarzian}
	S(f) =2B(|dz|^{2},f^{*}|dz|^{2})~.
\end{equation}  
Clearly $B(\sigma_{2},\sigma_{1})=-B(\sigma_{1},\sigma_{2})$. Similarly to the Schwarzian derivative, the Schwarzian tensor has a number of naturality  properties. For any metrics $\sigma_{1},\sigma_{2},\sigma_{3}$ on $\Omega\subset \mathbb{C}P^{1}$, 
\begin{itemize}
	\item Given a locally injective holomorphic map $f$, 
	\begin{equation} \label{eq:swt1} f^{*}B(\sigma_{1},\sigma_{2})=B(f^{*}\sigma_{1},f^{*}\sigma_{2})~.\end{equation}
	\item The cocycle property holds: 
	\begin{equation} \label{eq:swt2}B(\sigma_{1},\sigma_{3})=B(\sigma_{1},\sigma_{2})+B(\sigma_{2},\sigma_{3})~.\end{equation}
\end{itemize}
In particular, \eqref{eq:swt1} implies that if $\sigma_1$ and $\sigma_2$ are invariant by an automorphism of $\Omega$, then so is the quadratic differential $B(\sigma_{1},\sigma_{2})$. If a group $\Gamma$ acts on $\Omega$ by biholomorphisms with $\Omega/\Gamma\cong\Sigma$, thus inducing in quotient surface $\Sigma$ a Riemann surface structure, and $\sigma_1,\sigma_2$ are $\Gamma$-invariant conformal metrics, then $B(\sigma_{1},\sigma_{2})$ induces a well-defined quadratic differential in the quotient.

\subsubsection{M{\"o}bius flat metrics}\label{subsec:mob flat}
A conformal metric $\sigma$ is said to be \emph{M{\"o}bius flat} if $B(\sigma,|dz|^{2})=0$. From \eqref{eq:schw tensor and schwarzian}, for example, when $f$ is itself a M{\"o}bius transformation, then the pull-back metric $f^{*}|dz|^{2}$ is always M{\"o}bius flat. This is not the only case. Indeed, one can show that 
$B(\sigma,|dz|^2)=0$ if and only if $\sigma$ is the pull-back by a M\"obius transformation of one of the following metrics:
\begin{itemize}
\item the flat metric $|dz|^2$ on $\C$,
\item a positive multiple of the Poincar\'e metric on $\D$,
\item a positive multiple of the spherical metric on $\CP^{1}$.
\end{itemize}

Now, given a metric $\sigma$, we will denote by 
$$B(\sigma)=B(g_{\CP^1},\sigma)$$ the Schwarzian tensor of $\sigma$ with respect to a M{\"o}bius flat metric $g_{\CP^{1}}$. By the definition of  M{\"o}bius flat and the cocycle property \eqref{eq:swt2}, $B(\sigma)$ is independent of the chosen  M{\"o}bius flat metric $g_{\CP^{1}}$. Hence if $f$ is a M\"obius transformation, then 
\begin{equation}\label{eq invariance B}
B(f^*\sigma)=f^*B(\sigma)
\end{equation}
 by \eqref{eq:swt1}. As another consequence of the independence of the definition of $B(\sigma)$ from the choice of $g_{\CP^{1}}$, together with the definition \eqref{eq:schw tensor} applied to $B(e^{2t}\sigma)=B(|dz|^2,e^{2t}\sigma)$, we have that if $e^{2t}$ is any positive constant then 
\begin{equation}\label{eq:scale invariance}
B(e^{2t}\sigma)=B(\sigma)~.
\end{equation}

Finally, given a quadratic differential $\phi=\lambda(z)dz^2$ and a conformal metric $\sigma=e^{2\eta}|dz|^2$, we define the norm of $\phi$ with respect to $\sigma$ as:
$$\|\phi\|_\sigma(z):=e^{-2\eta(z)}|\lambda(z)|~.$$
Since both $|\phi|$ and $\sigma$ follow the same transformation rule under a biholomorphic change of coordinates, $\|\phi\|_\sigma$ is as well-defined function, meaning that if $f$ is a locally injective holomorphic function, then 
\begin{equation}\label{eq invariance norm}
\|f^*\phi\|_{f^*\sigma}=\|\phi\|_\sigma\circ f~.
\end{equation}
 In particular, if $\sigma=e^{2u}h_0$ is a conformal metric on  $(\Sigma,h)$ and $\phi$ is a quadratic differential on  $(\Sigma,h)$, then $\|\phi\|_\sigma$ is a function on $\Sigma$. From \eqref{eq:scale invariance}, we also obtain:
\begin{equation}\label{eq:scale invariance2}
\|\phi\|_{e^{2t}\sigma}=e^{-2t}\|\phi\|_\sigma~,
\end{equation}
for any constant $t\in\R$.

\subsubsection{Mean curvature}\label{subsec:mean}
We are now ready to provide the formula for the mean curvature of  Epstein maps. Let $\sigma$ be a $C^2$ conformal metric on an open set $\Omega$. To simplify the notation, we first suppose $\varphi=\mathrm{id}$. Assume moreover that $\Eps_{(\mathrm{id},\sigma)}$ is an immersion. In this case, it turns out that $\Eps_{(\mathrm{id},\sigma)}$ at $z$ is tangent to the unique horosphere through $\Eps_{(\mathrm{id},\sigma)}$ with point at infinity $z$. Then, the mean curvature of $\Eps_{(\mathrm{id},\sigma)}$ equals the function
	
	\begin{equation}\label{meancurveq}
		\mathcal H(\Eps_{(\mathrm{id},\sigma)}) = \frac{K(\sigma)^{2}-1-16\|B(\sigma)\|_\sigma^2}{(K(\sigma)-1)^{2}-16\|B(\sigma)\|_\sigma^2}~,
	\end{equation}
	where $K(\sigma)$ denotes the curvature of $\sigma$. See \cite[Equations 3.2, 3.3]{dumascp} and \cite[Lemma 3.4]{quinn}.
Here the mean curvature is defined as one half the trace of the second fundamental form with respect to the first fundamental form. It is computed with respect to the unit normal vector pointing towards $\Omega$. We will then apply the formula \eqref{meancurveq} when the Epstein map induces an embedded surface in $\Hyp^3/\Gamma$ for $\Gamma$ a quasi-Fuchsian group, and for $\Omega=\Omega^+$. Hence the convention of the mean curvature here is consistent with Remark \ref{rmk:convention}.

To write the general formula for $\Eps_{(\varphi,\sigma)}$, since the computation is local, we may restrict to an open subset $\Omega$ on which $\varphi$ is a biholomorphism onto its image. Let $\sigma$ be a metric on $\Omega$ and $\hat\sigma$ be such that $\varphi^*\hat\sigma=\sigma$. Then we observe that $K(\hat\sigma)\circ \varphi=K(\sigma)$, whereas by \eqref{eq:schw tensor and schwarzian}, \eqref{eq:swt1} and \eqref{eq:swt2},
$$\varphi^*B(\hat\sigma)=B(\varphi^*g_{\CP^1},\sigma)=B(\varphi^*g_{\CP^1},g_{\CP^1})+B(g_{\CP^1},\sigma)=B(\sigma)-\frac{1}{2}S(\varphi)~.$$
Hence we can deduce the expression:
\begin{equation}\label{meancurveq2}
		\mathcal H(\Eps_{(\varphi,\sigma)}) = \frac{K(\sigma)^{2}-1-16\|B(\sigma)-S(\varphi)/2\|^2_\sigma}{(K(\sigma)-1)^{2}-16\|B(\sigma)-S(\varphi)/2\|^2_\sigma}
	\end{equation}

\subsubsection{A technical point}
The rough idea to prove the existence of CMC surfaces using the implicit function theorem is the following. Consider quasi-Fuchsian manifolds $\Hyp^3/\Gamma$, where $\Omega^\pm$ are the connected components of the complement of the limit set $\Lambda_\Gamma$. We would like to write the solutions of the CMC condition $\mathcal H=c$, for $c\in (-1,1)$, as the level sets of a function $G$ which depends on the hyperbolic metric $h$ on $\Sigma$ in the conformal class of $\Omega^+/\Gamma$ (that is, it represents the first Bers parameter $h^+$ of $M$), on a holomorphic quadratic differential $\phi$ on $(\Sigma,h)$ which is (the quotient of) the Schwarzian derivative of the conformal isomorphism between $\D$ and $\Omega_+$, and finally on the conformal factor of a metric of the form $e^{2u}h$ on $\Sigma$. This last function $u$ is  an element of the infinite-dimensional functional space $C^\infty(\Sigma,\R)$. A priori the pair $(h,\phi)$ varies in an infinite-dimensional space as well, since $h$ varies in the space $\mathcal M_{-1}(\Sigma)$ of hyperbolic metrics. Although this is not really necessary, it will be convenient to use the action of $\mathrm{Diff}_0(\Sigma)$  to reduce ourselves to representatives of pairs $(h,\phi)$, now varying in the finite-dimensional space $\Q(\Sigma)$. The following lemma will serve to formalize this approach.

\begin{lemma}\label{lem:section}
Let $\pi:\mathcal M_{-1}(\Sigma)\to\T$ be the quotient map by the action of $\mathrm{Diff}_0(\Sigma)$ on $\mathcal M_{-1}(\Sigma)$. There exists a smooth section $s:\T\to \mathcal M_{-1}(\Sigma)$ of $\pi$.
\end{lemma}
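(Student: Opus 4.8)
The plan is to construct the section $s$ using the theory of harmonic maps, or equivalently by appealing to the known fact that the Teichm\"uller space $\T$ admits a smooth structure for which $\pi:\mathcal M_{-1}(\Sigma)\to\T$ is a smooth principal $\mathrm{Diff}_0(\Sigma)$-bundle (see Fischer--Tromba, Earle--Eells). Given this bundle structure, the existence of a smooth global section is automatic once one knows the total space retracts nicely: concretely, $\mathcal M_{-1}(\Sigma)$ is contractible and the fibers of $\pi$ are contractible (each fiber is a $\mathrm{Diff}_0(\Sigma)$-orbit, diffeomorphic to $\mathrm{Diff}_0(\Sigma)$, which is known to be contractible for $g\geq 2$ by the work of Earle--Eells), so no topological obstruction arises. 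However, a soft bundle-theoretic argument gives continuity but some care is needed for smoothness in the Fr\'echet category, so I would instead give an explicit construction.

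\textbf{Explicit construction via harmonic maps.} Fix a reference hyperbolic metric $h_0$ on $\Sigma$, defining a base point $X_0=[h_0]\in\T$. For any $X\in\T$, represented by some hyperbolic metric $h$, there is a unique harmonic diffeomorphism $w_{h}:(\Sigma,h_0)\to(\Sigma,h)$ homotopic to the identity (Eells--Sampson existence, Schoen--Yau / Sampson uniqueness, and the fact that a harmonic map between hyperbolic surfaces homotopic to a diffeomorphism is a diffeomorphism). I would then define
\begin{equation*}
s(X):=w_h^*h~.
\end{equation*}
First I would check this is well-defined: if $h'=\psi^*h$ for $\psi\in\mathrm{Diff}_0(\Sigma)$ represents the same point $X$, then $w_{h'}=\psi^{-1}\circ w_h$ by uniqueness of harmonic maps (pre/post-composition with isometries and the homotopy condition), hence $w_{h'}^*h'=w_h^*\psi_*\psi^*h=w_h^*h$. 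Second, $s(X)$ depends only on $X$ and $\pi(s(X))=[w_h^*h]=[h]=X$ since $w_h$ is isotopic to the identity, so $s$ is a genuine section. Third, smoothness: the harmonic map $w_h$ depends smoothly on the target metric $h$ (this is the standard implicit-function-theorem argument applied to the harmonic map equation, whose linearization is the Jacobi operator, invertible in negative curvature), and the smooth structure on $\T$ is precisely the one making such harmonic-map parameterizations smooth, so $s$ is smooth.

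\textbf{Main obstacle.} The delicate point is not the existence but making the smoothness statement precise: one must fix which smooth structure on $\T$ and on $\mathcal M_{-1}(\Sigma)$ (as a Fr\'echet or tame Fr\'echet manifold, or via a finite-dimensional chart structure on $\T$) is being used, and then verify that the harmonic map $w_h$, and hence $s$, varies smoothly in that structure. This is where one invokes the regularity theory for the harmonic map equation with parameters: the map $(h,w)\mapsto \tau_h(w)$ (tension field) is smooth between appropriate function spaces, its partial derivative in $w$ at a solution is the Jacobi operator $J_w=\Delta-(\text{curvature term})$, which for maps into a negatively curved target is a negative-definite elliptic operator hence an isomorphism, and the implicit function theorem (in the Banach setting after Sobolev completion, then bootstrapping to $C^\infty$ by elliptic regularity) yields smooth dependence. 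Alternatively, and perhaps most cleanly for the purposes of this paper, I would simply cite that $\pi$ is a smooth fiber bundle with contractible structure group $\mathrm{Diff}_0(\Sigma)$ over the (finite-dimensional) manifold $\T$ --- see \cite{zbMATH05042912} or the Fischer--Tromba slice theorem --- so that the obstruction classes to a section live in $H^{k+1}(\T;\pi_k(\mathrm{Diff}_0(\Sigma)))=0$, giving a smooth section directly. I expect the cleanest exposition combines the explicit harmonic-map formula above (which makes the section canonical) with a one-line citation for the smoothness.
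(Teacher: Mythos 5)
Your proof is correct and takes essentially the same route as the paper, which also invokes Wolf's harmonic-maps description of Teichm\"uller space to build the section. The only difference is presentational: the paper parameterizes by the vector space of holomorphic quadratic differentials $H^0((\Sigma,h_0),K^2)$ and uses Wolf's homeomorphism $q\mapsto [h_q]$ with $\T$, whereas you define the section directly by $[h]\mapsto w_h^*h$ for $w_h$ the unique harmonic diffeomorphism homotopic to the identity --- but since $w_h:(\Sigma,w_h^*h)\to(\Sigma,h)$ is an isometry, $\mathrm{id}:(\Sigma,h_0)\to(\Sigma,w_h^*h)$ is harmonic, so the two constructions produce the identical section.
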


We remark that the section $s$ that we are looking for is not ``canonical" in any manner. There are actually several ways to achieve this; we will sketch one relying on the theory of harmonic maps of hyperbolic surfaces, see \cite{Wolf1989}.

\begin{proof}[Sketch of proof of Lemma \ref{lem:section}]
Fix a hyperbolic metric $h_0$ on $\Sigma$, and consider the vector space $H^0((\Sigma,h_0),K^2)$ of holomorphic quadratic differentials on $(\Sigma,h_0)$. Then for every $q\in H^0((\Sigma,h_0),K^2)$ there exists a unique hyperbolic metric $h_q$ such that $\mathrm{id}:(\Sigma,h_0)\to (\Sigma,h_q)$ is harmonic, with $h_q$ depending smoothly on $q$. The correspondence $q\mapsto h_q$ therefore gives a map $H^0((\Sigma,h_0),K^2) \rightarrow \mathcal M_{-1}(\Sigma)$ that, when post-composed with $\pi$, provides a homeomorphism from $H^0((\Sigma,h_0),K^2)$ to $\T$. This proves the existence of the desired section.
\end{proof}

\begin{remark}\label{rmk:wolf}
Wolf's approach via harmonic maps actually led to the construction of a global parameterization of $\T$ by means of the space $H^0((\Sigma,h_0),K^2)$, once the metric $h_0$ is fixed. This allows us to identify the space $\Q(\Sigma)$ with a very concrete finite-dimensional manifold of real dimension $12g-12$, namely the total space of the smooth vector bundle $\mathcal E$ over $H^0((\Sigma,h_0),K^2)$ whose fiber over a quadratic differential $q$ is equal to $H^0((\Sigma,h_q),K^2)$, the space of holomorphic quadratic differentials of the hyperbolic surface $(\Sigma,h_q)$. In rest of our exposition we will identify with abuse any pair $(h,\phi)$ with its corresponding point in the total space of $\mathcal E$. (Notice that the identification with $\Q(\Sigma)$ heavily depends on the choice of the section $s$ from Lemma \ref{lem:section}.)
\end{remark}

\section{Existence of CMC surfaces}

The purpose of this section is to prove two existence results for CMC surfaces, morally one (Theorem \ref{thm:foliation_ends}) ``in the ends" and the other (Theorem \ref{thm:existence_compact}) ``in the compact part". Then in Theorem \ref{thm:existence} we combine them to obtain the existence of CMC surfaces for $h\in(-1,1)$ for quasi-Fuchsian manifolds close to the Fuchsian locus, which is for the moment  weaker than our main result, Theorem \ref{thm:foliation}.

\subsection{Existence in the ends}\label{sec:foliation_ends}

It has been proved in \cite{MP} that the ends of every quasi-Fuchsian manifold are monotonically foliated by CMC surfaces; another proof has been provided recently in \cite{quinn}. Here we will need an improved statement, so as to have a local (in $\QF(\Sigma)$) uniform control on the value of the mean curvature along the leaves of the foliation. 

\begin{theorem}\label{thm:foliation_ends}
	Let $\Sigma$ be a closed oriented surface of genus $\geq 2$ and $m\in\QF(\Sigma)$. Then there exists a neighbourhood $U_0$ of $m$ in $\QF(\Sigma)$ and a constant $\epsilon=\epsilon(m,U_0)$ such that the ends of every quasi-Fuchsian manifold in $U_0$ are smoothly monotonically foliated by CMC surfaces whose mean curvature ranges in $(-1,-1+\epsilon)$ and in $(1-\epsilon,1)$.
\end{theorem}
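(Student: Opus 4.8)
The plan is to set up the CMC equation in the ends as a nonlinear elliptic PDE on the conformal factor via the Epstein construction, and then apply a quantitative version of the implicit function theorem, tracking all constants uniformly over a small neighbourhood $U_0$ of $m$ in $\QF(\Sigma)$. Concretely, by Bers' uniformization together with Lemma~\ref{lem:section} and Remark~\ref{rmk:wolf}, I identify $\QF(\Sigma)$ near $m$ with an open set in the finite-dimensional manifold $\Q(\Sigma)$, so that a quasi-Fuchsian manifold close to $m$ is encoded by a pair $(h,\phi)$, with $h$ a hyperbolic metric on $\Sigma$ (the Bers parameter $h^+$) and $\phi$ a holomorphic quadratic differential on $(\Sigma,h)$, which is the quotient of the Schwarzian derivative $S(\varphi)$ of the developing map $\varphi \colon \D \to \Omega^+$. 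For a conformal metric $\sigma = e^{2u} h$ on $\Sigma$ I consider the Epstein surface $\Eps_{(\varphi,\sigma)}$, whose mean curvature is given by the explicit formula \eqref{meancurveq2} in terms of $K(\sigma)$, $\|B(\sigma) - S(\varphi)/2\|_\sigma$. The first step is therefore to write down, for each target value $c$ of the mean curvature near $\pm 1$, the map
\[
G(h,\phi,u,c) := \mathcal H\big(\Eps_{(\varphi, e^{2u}h)}\big) - c,
\]
as a smooth map between suitable Banach spaces (Hölder or Sobolev spaces of sections over $\Sigma$), and to understand its zero set.

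The key observation, following Quinn, is that near the asymptotic boundary the relevant solutions are small perturbations of an explicit model: when $\phi = 0$ (the Fuchsian case) and $u$ is constant, $\sigma$ is a multiple of the hyperbolic metric, $B(\sigma)=0$, $K(\sigma)$ is constant, and \eqref{meancurveq} shows $\mathcal H$ is a constant depending only on that multiple, sweeping out $(-1,1)$ as the multiple varies (this is the Fuchsian CMC foliation). Parametrizing the conformal factor as $\sigma = e^{2(u + t)} h$ where $t\in\R$ is a ``large'' parameter controlling proximity to the end and $u \in C^{k,\alpha}(\Sigma,\R)$ has, say, zero average, the scaling identities \eqref{eq:scale invariance2} show that $\|B(\sigma) - S(\varphi)/2\|_\sigma = e^{-2t}\|B(e^{2u}h) - S(\varphi)/2\|_{e^{2u}h}$, which becomes uniformly small as $t\to -\infty$ (resp.\ $+\infty$, after the analogous construction on $\Omega^-$). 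Rewriting the CMC equation $G=0$ in terms of $(u,t)$ and the new unknown $c$ (or equivalently solving for $u$ as a function of $(h,\phi,t)$), the linearization in $u$ at the model solution $(u \equiv 0, \phi = 0)$ is, up to lower order terms, of the form $\Delta_h u + (\text{positive constant})\,u$ — an invertible elliptic operator on $\Sigma$ with no kernel — so the implicit function theorem in Banach spaces produces, for each $t$ sufficiently negative (resp.\ positive) and each $(h,\phi)$ in a neighbourhood of $m$, a unique small solution $u = u(h,\phi,t)$ depending smoothly on all parameters. Elliptic regularity upgrades the solutions to smooth metrics, hence to smooth CMC immersions; one checks $\Eps_{(\varphi,\sigma)}$ is an immersion (the denominator in \eqref{meancurveq2} is nonzero) and that it descends to an embedding homotopic to $\Sigma\times\{*\}$ in $\Hyp^3/\Gamma$ for $t$ large, since there it is $C^1$-close to the model equidistant surface. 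That the family is a \emph{monotone foliation} follows from a second application of the implicit function theorem: differentiating in $t$, one shows $\partial_t \mathcal H \neq 0$ along the leaves and that the leaves are pairwise disjoint and exhaust the end, so $(z,t) \mapsto \Eps_{(\varphi, e^{2(u(h,\phi,t)+t)}h)}(z)$ is a diffeomorphism onto a neighbourhood of the end.

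The only genuinely delicate point — and the place where the theorem improves on \cite{quinn} — is \textbf{uniformity of the constants over $U_0$}. One must choose the neighbourhood $U_0$ of $m$ and the constant $\epsilon$ so that \emph{the same} threshold $t_0$ (equivalently, the same $\epsilon$, via $c \in (-1,-1+\epsilon)\cup(1-\epsilon,1)$) works for all $(h,\phi) \in U_0$. This requires a version of the implicit function theorem with explicit control of the size of the neighbourhood of solvability in terms of (i) the norm of the inverse of the linearized operator and (ii) uniform bounds on the $C^2$ (or $C^{k,\alpha}$) norm of the nonlinearity $G$ and its derivatives. Both are available by a compactness/continuity argument: the linearized operator depends continuously on $(h,\phi,t)$ and is uniformly invertible on the compact closure of a small neighbourhood of $(m, t_0)$, and the nonlinear terms in \eqref{meancurveq2} — which involve $K(e^{2u}h)$, the Schwarzian tensor $B(e^{2u}h)$, and $S(\varphi)$, all depending continuously on $(h,\phi,u)$ — are uniformly bounded on such a neighbourhood. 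I expect the bookkeeping of these estimates, together with verifying that the model solution and its linearization behave uniformly as $(h,\phi)$ ranges over $U_0$, to be the main technical obstacle; the conceptual structure is otherwise a direct adaptation of Quinn's argument combined with the scaling behaviour \eqref{eq:scale invariance}–\eqref{eq:scale invariance2} of the Schwarzian norm.
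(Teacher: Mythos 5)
Your conceptual framework matches the paper's: encode a quasi-Fuchsian manifold near $m$ by a pair $(h,\phi)\in\Q(\Sigma)$ via Lemma~\ref{lem:section}, use the Epstein map and the mean-curvature formula \eqref{meancurveq2} to express the CMC condition as an elliptic PDE on the conformal factor, and invoke an implicit function theorem. But there is a genuine gap at precisely the step you single out as delicate, and your proposed fix does not close it.

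You write $\sigma=e^{2(u+t)}h$, apply the implicit function theorem ``for each $t$ sufficiently large'' near $u\equiv 0$, and propose to extract uniform constants via ``compactness/continuity on the compact closure of a small neighbourhood of $(m,t_0)$''. Two things go wrong. First, to obtain leaves of \emph{every} mean curvature in $(-1,-1+\epsilon)$ the parameter $t$ must range over an unbounded ray, and a compactness argument in $(h,\phi)$ alone cannot prevent the size of the neighbourhood of solvability from collapsing as $t\to\infty$; one would need to prove uniform convergence of the linearization and the nonlinearity in that non-compact limit, which is exactly the missing content. Second, and more basic: your base point $(u\equiv 0,\phi=0)$ is not a solution of the CMC equation when $\phi_0(m)\neq 0$ --- that is, for any non-Fuchsian $m$ --- so the implicit function theorem cannot be applied there at all; the term $\|B(\sigma)-\phi/2\|^2_\sigma$ vanishes only in the limit $t\to\infty$, and an approximate solution at finite $t$ is not a valid base point. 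The paper's device (inherited from \cite{quinn}) resolves both problems at once: the change of variables $v=u+\tfrac12\log\tfrac{1+H}{1-H}$, after clearing denominators, produces \eqref{eq:us2}, in which the Schwarzian term carries the explicit prefactor $(-1-H)$. At $H=-1$ this prefactor vanishes, so $(H,h,\phi,v)=(-1,h_0,\phi_0,0)$ is an \emph{exact} solution for \emph{arbitrary} $\phi_0$, and $H=-1$ becomes a regular finite point of the renormalized equation. One application of the standard Banach implicit function theorem at this single base point, with $(H,h,\phi)$ as finite-dimensional parameters, then yields a product neighbourhood $[-1,-1+\epsilon)\times U_0$ directly. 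No quantitative implicit function theorem and no uniform-constant tracking are needed; the compactification of the parameter \emph{is} the uniformity statement.

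Two further points on the foliation assertion, where your outline is too thin. The map $(z,H)\mapsto\Eps_{(f_{\tilde\phi},e^{2\mathsf u(H,h,\phi)}h)}(z)$, extended to the boundary, has degenerate differential in $H$ at $H=-1$; the paper first reparametrizes by $t=\sqrt{1+H}$ (see \eqref{eq:long}) and only then computes the non-degenerate differential \eqref{eq:invertible matrix 1}. Your claim that ``$\partial_t\mathcal H\neq 0$'' would need a comparable normalization to make sense at the boundary. And local invertibility of the differential does not by itself give global injectivity of the foliating map uniformly over $U_0$; the paper also needs the topological Lemma~\ref{topologylemma}, which exploits compactness of $\Sigma$ and injectivity of the boundary map at $t=0$ to shrink the domain until the family map is globally injective for all $(h,\phi)$ in $U_0$ simultaneously.
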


We say that the ends of $M\cong\Sigma\times\R$ are the connected components of the complement of a compact submanifold with boundary in $M$ homeomorphic to $\Sigma\times I$ for $I$ a closed interval.

\subsubsection{Outline of the CMC existence for a fixed manifold}\label{subsec:outline1}
We now quickly review, using our notation and set-up, the proof given in \cite{quinn} and later we will explain how it adapts in order to prove Theorem \ref{thm:foliation_ends}. Roughly speaking, the proof of \cite{quinn} is  an application of the implicit function theorem to the equation of constant mean curvature from the mean curvature formula \eqref{meancurveq}, with respect to a conformal metric at infinity.

More precisely, the idea of Quinn's  proof is to consider Epstein maps defined on $\Omega_+$, with $\varphi=\mathrm{id}$, associated to  a conformal metric of the form $\sigma(u)=e^{2u}h_0$ for $h_0$ the conformal complete hyperbolic metric, and to study the following equation  in $u$:
$$\mathcal H(\Eps_{(\mathrm{id},\sigma(u))})=H$$
 for $H\in (-1,1)$ close to $\pm 1$. From \eqref{meancurveq}, this gives  the equation: 
\begin{equation}\label{eq:quinn}
	\mathcal H(\Eps_{(\mathrm{id},\sigma(u))}) = \frac{K(\sigma(u))^{2}-1-16\|B(\sigma(u))\|^{2}_{\sigma(u)}}{(K(\sigma(u))-1)^{2}-16\|B(\sigma(u))\|^{2}_{\sigma(u)}}= H
\end{equation}

\begin{remark}\label{rmk:quinn1}
If we choose a  metric $\sigma$ invariant under the quasi-Fuchsian group $\Gamma$ acting on $\Omega_+$ by biholomorphisms, then $ \|B(\sigma(u))\|^{2}_{\sigma(u)}$ is a well-defined invariant function on the quotient $\Sigma$, by \eqref{eq invariance B} and \eqref{eq invariance norm}. This shows that the equation \eqref{eq:quinn} can be really thought as an equation for a function $u$ on the quotient surface $\Sigma$, where $\sigma(u)=e^{2u}h_0$ is a metric on $\Sigma$.
\end{remark}

\begin{remark}\label{rmk:quinn2}
In the situation of Remark \ref{rmk:quinn1}, the uniqueness property of the Epstein map as in Proposition \ref{prop:eps map} implies that the Epstein surface is invariant under the quasi-Fuchsian group $\Gamma$. More precisely, for any $\gamma\in \Gamma$, we have
\begin{equation}\label{eq:invariance}
\Eps_{(\mathrm{id},\sigma(u))}\circ\gamma=\gamma\circ \Eps_{(\mathrm{id},\sigma(u))}~.
\end{equation}
Therefore $\Eps_{(\mathrm{id},\sigma(u))}$ induces a map from $\Omega^+/\Gamma$ to the quasi-Fuchsian manifold $\Hyp^3/\Gamma$.
\end{remark}

Now the trick consists in performing a renormalization to Equation \eqref{eq:quinn}, so as to obtain an equivalent equation, for which we can find an explicit solution for $H=-1$. This consists in the change of variables from $(H,u)$ to $(H,v)$, where
\begin{equation}\label{eq:change variables}
v:=u+\frac{1}{2}\log\left(\frac{1+H}{1-H}\right)~.
\end{equation}
Let us now set $\tau(v)=e^{2v}h_0$, so that we have the identity:
\begin{equation}\label{eq:change variables2}
\tau(v)=\frac{1+H}{1-H}\sigma(u)~.
\end{equation}
A direct computation from \eqref{eq:quinn} and \eqref{eq:change variables2} (and using also \eqref{eq:scale invariance2}) shows that $u$ solves \eqref{eq:quinn} for $H\in (-1,1)$ if and only if $v$ solves the equation:

\begin{equation}\label{eq:quinn2}
	G(H,v):= 1-H -2HK(\tau(v) ) + (-1-H)\left(K(\tau(v))^{2}- 16\|B(\tau(v))\|^{2}_{\tau(v)}\right)=0~.
\end{equation}

The big advantage is that now the choice $v_0\equiv 0$ satisfies $G(-1,v_0)=0$, since $\tau(v_0)=h_0$  and $K(h_0)=-1$.
Hence we are in the right setting to apply the implicit function theorem near this solution $(-1,v_0)$ of the equation $G=0$ (see e.g. \cite[\S I.5]{lang}). One must show that the derivative of $G$ with respect of $u$ is an invertible operator between suitable function spaces (see details below), and achieves a family of solutions $\mathsf v=\mathsf v(H)$ of \eqref{eq:quinn2} depending smoothly on $H$, for $H\in [-1,-1+\epsilon)$. This will provide CMC surfaces with mean curvature $H$ close to $-1$ via the Epstein maps $\Eps_{(\mathrm{id},\sigma(\mathsf u(H)))}$, where
$$\mathsf u(H)=\mathsf v(H)-\frac{1}{2}\log\left(\frac{1+H}{1-H}\right)~.$$

\subsubsection{Adaptation for Theorem \ref{thm:foliation_ends}}\label{subsec:adapt}

We will now describe the extension of this strategy in our setting. The difference is that we need to allow the quasi-Fuchsian manifold to vary as well, represented by a variation of a pair $(h,\phi)$, and thus of the holomorphic map $f=f_{\tilde\phi}$ which gives a biholomorphism between $\D$ and the domain $\Omega^+$. Let us explain this in detail.

To make explicit the dependence on the hyperbolic metric $h$, we now denote $\sigma_h(u):=e^{2u}h$. We need to replace Equation \eqref{eq:quinn} by the condition that the mean curvature of the Epstein map $\Eps_{(f_{\tilde\phi},\sigma_h(u))}$ equals $H$. From Equation \eqref{meancurveq2}, we see that such identity reads:

\begin{equation}\label{eq:us}
	 \frac{K(\sigma_h(u))^{2}-1-16\|B(\sigma_h(u))-\phi/2\|^{2}_{\sigma_h(u)}}{(K(\sigma_h(u))-1)^{2}-16\|B(\sigma_h(u))-\phi/2\|^{2}_{\sigma_h(u)}}= H
\end{equation}
where we have used that the holomorphic quadratic differential induced in the quotient by $S(f_{\tilde\phi})$ equals $\phi$ by construction. This is again an equation on the closed surface $\Sigma$, and the same change of variables as in \eqref{eq:change variables} leads to the equation:

\begin{equation}\label{eq:us2}
	G(H,h,\phi,v):= 1-H -2HK(\tau_h(v) ) + (-1-H)\left(K(\tau_h(v))^{2}- 16\|B(\tau_h(v))-\phi/2\|^{2}_{\tau_h(v)}\right)=0~,
\end{equation}
where now $\tau_h(v)=e^{2v}h$. 

Now, fix a hyperbolic metric $h_0$ on $\Sigma$ and a holomorphic quadratic differential $\phi_0$ on $(\Sigma,h)$. Similarly to Section \ref{subsec:outline1}, a solution to Equation \eqref{eq:us2} is given by $(-1,h_0,\phi_0,v_0)$ where $v_0$ denotes the constant null function, since $\tau_{h_0}(v_0)=h_0$ has curvature $-1$. 
To apply the implicit function theorem, let us describe carefully the domain of definition of $G$. Recall from Remark \ref{rmk:wolf} that the choice of a section as in Lemma \ref{lem:section} provides us with a diffeomorphism between $\Q(\Sigma)$ and  $\R^{12g-12}$. We consider thus the open subset $W$ of $\R^{12g-12}$ that corresponds to the image of $\QF(\Sigma)$ under the map $\mathcal S$ introduced in  \eqref{eq:schwarzian map}. By a small abuse of notation, we will denote the elements of $W$ as a pair $(h,\phi)$, where $h$ is a hyperbolic metric and $\phi$ a holomorphic quadratic differential on $(\Sigma,h)$. Then we consider $G$ as a map
$$G:\R\times W\times W^{2,s}(\Sigma, h_0)\to W^{2,s - 2}(\Sigma, h_0)$$
for $s\geq 2$, where $W^{2,s}(\Sigma, h_0)$ denotes the Sobolev space of real-valued functions on $\Sigma$ that admit $L^2$-integrable weak derivatives of order $\leq s$ (with respect to the standard Riemannian measure of $h_0$), and $W^{2,0}(\Sigma,h_0) : = L^2(\Sigma, h_0)$. By direct inspection, $G$ depends smoothly on all variables. 
We now need to show that the derivative $d_{v}G_{(-1,h_0,\phi_0,v_0)}$ is a bounded invertible operator, for any $s\geq 2$. A simple computation gives: 

\begin{equation}\label{eq:differential}
\begin{split}
	d_{v}G_{(-1,h_0,\phi_0,v_0)}(\dot v) &= 2 \left.\frac{d}{dv}\right|_{v=v_0} (K(e^{2v}h)) \\
	& = 2 \left.\frac{d}{dv}\right|_{v=v_0}(e^{-2v}(-\Delta_{h_0} v+K(h_0))  \\
	&= 2(2\dot v-\Delta_{h_0}\dot v)
\end{split}
\end{equation} 
It is well-known that such an operator is a continuous linear isomorphism; we provide here a sketch of proof for convenience of the reader.

\begin{lemma}\label{lem filippo}
Let $f$ be a smooth and strictly positive function, and let $h$ be any Riemannian metric on a compact surface $\Sigma$. Then the operator $u \mapsto f u - \Delta_h u$ is a positive definite and continuous linear isomorphism from $W^{2,s}(\Sigma, h)$ to $W^{2,s-2}(\Sigma, h)$ for any $s\geq 2$. In particular, for any smooth function $\lambda$ on $\Sigma$, there exists a unique smooth function $u$ satisfying $\Delta_h u - f u = \lambda$.
\end{lemma}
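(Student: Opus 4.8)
The plan is to prove that $L u := f u - \Delta_h u$ is a continuous linear isomorphism $W^{2,s}(\Sigma,h)\to W^{2,s-2}(\Sigma,h)$ for every $s\ge 2$, and then obtain the smooth solvability statement as a consequence via elliptic regularity. First I would note that continuity of $L$ is immediate: $\Delta_h$ maps $W^{2,s}$ to $W^{2,s-2}$ boundedly since it is a second-order differential operator with smooth coefficients on a compact manifold, and multiplication by the smooth bounded function $f$ is bounded on each $W^{2,s-2}$. For injectivity and positive definiteness, I would first treat the base case $s=2$ and use the symmetric bilinear form associated to $L$: for $u,w\in W^{2,2}(\Sigma,h)\subset H^1(\Sigma,h)$, integration by parts gives $\int_\Sigma (Lu)\, w \, dA_h = \int_\Sigma \big( f u w + \langle \nabla u,\nabla w\rangle_h\big)\, dA_h =: \mathcal{B}(u,w)$. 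Since $f$ is smooth and strictly positive on the compact surface $\Sigma$, we have $f\ge c_0>0$, so $\mathcal{B}(u,u) \ge \min(c_0,1)\,\|u\|_{H^1}^2$, which shows $\mathcal{B}$ is coercive on $H^1(\Sigma,h)$; it is also clearly bounded on $H^1$.

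Next I would invoke the Lax--Milgram theorem: for any $\lambda\in L^2(\Sigma,h) = W^{2,0}(\Sigma,h)$, the antilinear functional $w\mapsto \int_\Sigma \lambda w\, dA_h$ is bounded on $H^1$, so there is a unique $u\in H^1(\Sigma,h)$ with $\mathcal{B}(u,w) = \int_\Sigma \lambda w\, dA_h$ for all $w\in H^1$, i.e.\ $u$ is a weak solution of $Lu=\lambda$. Elliptic regularity for the uniformly elliptic operator $L$ with smooth coefficients on the compact manifold $\Sigma$ then upgrades this to $u\in W^{2,2}(\Sigma,h)$ together with the estimate $\|u\|_{W^{2,2}}\le C(\|\lambda\|_{L^2} + \|u\|_{L^2})$; combined with the coercivity bound $\|u\|_{L^2}\le \|u\|_{H^1}\le C'\|\lambda\|_{L^2}$ this yields $\|u\|_{W^{2,2}}\le C''\|\lambda\|_{L^2}$. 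Hence $L:W^{2,2}\to W^{2,0}$ is a bounded bijection with bounded inverse. Positive definiteness is exactly the statement $\mathcal{B}(u,u)>0$ for $u\ne 0$, already established.

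For general $s\ge 2$, I would bootstrap: given $\lambda\in W^{2,s-2}(\Sigma,h)$, the solution $u\in W^{2,2}$ produced above satisfies $\Delta_h u = fu - \lambda$, and one argues inductively that if $u\in W^{2,k}$ for $2\le k< s$ then $fu-\lambda\in W^{2,\min(k,s-2)}$, so by interior (here, global, since $\Sigma$ is closed) elliptic regularity $u\in W^{2,\min(k+2,s)}$; iterating reaches $u\in W^{2,s}$, with the accompanying elliptic estimate $\|u\|_{W^{2,s}}\le C_s(\|\lambda\|_{W^{2,s-2}}+\|u\|_{L^2})\le C_s'\|\lambda\|_{W^{2,s-2}}$. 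This shows $L:W^{2,s}\to W^{2,s-2}$ is a continuous linear isomorphism for every $s\ge 2$. Finally, for the last assertion, given a smooth $\lambda$ on $\Sigma$ we have $\lambda\in W^{2,s-2}$ for all $s$, so there is a unique $u\in \bigcap_{s\ge 2}W^{2,s}(\Sigma,h)$ with $Lu = -\lambda$, i.e.\ $\Delta_h u - f u = \lambda$; by the Sobolev embedding theorem $\bigcap_s W^{2,s}(\Sigma,h)= C^\infty(\Sigma)$, so $u$ is smooth, and uniqueness follows from injectivity of $L$ on $W^{2,2}$.

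The only mild subtlety — the ``main obstacle'', such as it is — is making sure the elliptic regularity and a priori estimates are invoked in the correct form on a closed manifold (no boundary terms, global estimates) and that the bootstrap respects the cap at $s-2$ coming from the regularity of the right-hand side data $\lambda$; everything else is a routine application of Lax--Milgram plus elliptic theory, which is why the paper is content to sketch it. One could equivalently bypass Lax--Milgram by diagonalizing $\Delta_h$ via its eigenbasis when $f$ is constant, but since $f$ is only assumed smooth and positive the variational argument above is the clean route.
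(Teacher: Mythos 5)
Your proof is correct and follows essentially the same route as the paper: establish coercivity of the symmetric bilinear form $\int_\Sigma (f\,uw + h(\nabla u,\nabla w))\,\mathrm{d}a_h$ on $W^{2,1}(\Sigma,h)$, obtain a weak solution, and then invoke elliptic regularity to upgrade to $W^{2,s}$. The only cosmetic difference is that the paper observes this form is equivalent to the $W^{2,1}$ inner product and applies the Riesz representation theorem, whereas you invoke Lax--Milgram --- for a symmetric coercive form these are the same argument --- and your bootstrap for general $s\ge 2$ is spelled out a bit more explicitly than the paper's appeal to elliptic regularity.
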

\begin{proof}
    Let $T$ denote the continuous linear operator
    \[
    T : = f \, \mathit{id} - \Delta_h : W^{2,s}(\Sigma, h) \to W^{2, s - 2}(\Sigma, h) ,
    \]
    for some $s \geq 2$. A simple integration by parts shows that $T$ is a positive definite symmetric operator with respect to the $L^2$-scalar product: indeed, for any $v, w \in W^{2,s}(\Sigma, h)$, we have
    \[
    \langle v, T w \rangle_{L^2} = \int_{\Sigma} v \, T w \, \mathrm{d}{a}_h = \int_{\Sigma} (f v w + h(\nabla v, \nabla w) ) \, \mathrm{d}{a}_h ,
    \]
    where $\nabla v$ denotes the (weak) gradient of $v$ with respect to the metric $h$, and $\mathrm{d}{a}_h$ is the standard Riemannian volume form. Since $f$ is a strictly positive function, $T$ satisfies $\langle v , T v \rangle_{L^2} \geq 0$ for any $v \in W^{2,s}(\Sigma, h)$, with equality if and only if $v = 0$. To prove that $T$ is surjective, let $\lambda \in W^{2,s-2}(\Sigma, h)$, and define the linear functional
    \[
    \varphi(v) : = \int_{\Sigma} v\lambda \, \mathrm{d}{a}_h .
    \]
    Notice that $\varphi$ is continuous with respect to the $L^2$-norm, and hence with respect to the Sobolev norm $\| \cdot \|_{W^{2,s}}$ for any $s \geq 0$. We now introduce the following bilinear symmetric form on $W^{2,1}(\Sigma,h)$:
    \[
    a(v,w) : = \int_{\Sigma} (f vw + h(\nabla v, \nabla w)) \, \mathrm{d}{a}_h ,
    \]
    If $C \geq 1$ is some positive constant satisfying $C^{-1} \leq f \leq C$, then we have
    \[
    C^{-1} \| v \|^2_{W^{2,1}} \leq a(v,v) \leq C \| v \|^2_{W^{2,1}} .
    \]
    for any $v \in W^{2,1}(\Sigma,h)$. Therefore the bilinear form $a$ is equivalent to the standard Hilbert scalar product of the Sobolev space $W^{2,1}(\Sigma,h)$, and therefore $\varphi$ is continuous with respect to $a$ as well. By Riesz representation theorem, we conclude that there exists a unique $u \in W^{2,1}(\Sigma,h)$ satisfying $a(u,v) = \varphi(v)$ for any $v \in W^{2,1}(\Sigma,h)$. This proves the existence of a weak solution $u \in W^{2,1}(\Sigma,h)$ of the equation $f u - \Delta_h u = \lambda$. 
    
    A more delicate analysis is then required to show that the regularity of $\lambda \in W^{2,s - 2}(\Sigma, h)$  is sufficient to "promote" $u$ to a genuine element in $W^{2,s}(\Sigma, h)$ satisfying $T u = \lambda$. This is the part of the argument where elliptic regularity theory is required, leading to controls of the form
    \[
    \| u \|_{W^{2,s}} \leq M ( \| u \|_{L^2} + \| \lambda \|_{W^{2,s - 2}} ) ,
    \]
    with the multiplicative constant $M > 0$ that depends only on $s \geq 2$, the function $f$, and the compact Riemannian surface $(\Sigma, h)$. We refer to \cite[\S 10.3.2]{nicolaescu} (see in particular \cite[Theorem 10.3.12]{nicolaescu}) for a detailed exposition of elliptic regularity results on smooth manifolds.
\end{proof}

We have thus shown that $d_{v}G:W^{2,s}(\Sigma,h_0)\to W^{2,s-2}(\Sigma,h_0)$ is a linear isomorphism at the point $(-1,h,\phi,v_0)$. We can now apply the implicit function theorem for Banach spaces, and deduce that there exist $\epsilon>0$, a neighbourhood $U_0$ of $(h_0,\phi_0)$ and a function 
$$\mathsf v:[-1,1+\epsilon)\times U_0\to W^{2,s}(\Sigma,h_0)$$
 such that all solutions of $G=0$ in a neighbourhood of $(-1,h_0,\phi_0,v_0)$ are of the form $G(H,h,\phi,\mathsf v(H,h,\phi))=0$. Exactly as in \cite{quinn}, one can then apply elliptic regularity to show that the functions $\mathsf v(H,h,\phi)$ are smooth and depend smoothly on $(H,h,\phi)$ (see e.g. \cite[Lemma 17.16]{trudinger}).

Using \eqref{eq:change variables}, we then define the function $\mathsf u:[-1,1+\epsilon)\times U_0\to W^{2,s}(\Sigma,h)$ by 
\begin{equation}\label{eq solutions u}
\mathsf u(H,h,\phi):=\mathsf v(H,h,\phi)-\frac{1}{2}\log\left(\frac{1+H}{1-H}\right)~.
\end{equation}
By construction, as $H$ varies in $[-1,-1+\epsilon)$, the Epstein maps 
$$\Eps_{(f_{\tilde\phi},e^{2\mathsf u(H,h,\phi)})}:\D\to\Hyp^3$$ then induce (smooth) CMC immersions of mean curvature $H$. We will  see in Section \ref{sec:fol ends} below that, up to choosing smaller $\epsilon$ and $U_0$, these maps are immersions. Moreover, as observed in Remark \ref{rmk:quinn2}, they induce CMC immersions in the quasi-Fuchsian manifold whose image via the map $\mathcal S$ is the point $(h,\phi)$.

Of course the same argument can be applied to the other end, namely for the component $\Omega^-$ of the domain of discontinuity, and for $H$ close to $1$. This concludes the existence part in Theorem \ref{thm:foliation_ends}.

\subsection{Foliations of the ends}\label{sec:fol ends}

We now discuss the foliation part of Theorem \ref{thm:foliation_ends}.
For this purpose, let us first outline the proof given in \cite{quinn}, to show that the ends of a given quasi-Fuchsian manifold $M$ are foliated by CMC surfaces; we will then adapt this proof in order to complete the proof of Theorem \ref{thm:foliation_ends}.

\subsubsection{Outline of the foliation statement for a fixed manifold}

In our notation from the previous section, Quinn's idea is to consider, for $h_0$ and $\phi_0$ fixed, the map 
$$\widehat\Psi:\Sigma\times [-1,-1+\epsilon)\to M\cup\partial_\infty^+M$$
which is induced in the quotient by the map $\Psi:\Omega^+\times [-1,-1+\epsilon)\to \Hyp^3\cup\Omega^+$:
$$\Psi(z,H)=\begin{cases}
z & \textrm{if }H=-1 \\
\Eps_{(\mathrm{id},e^{2\mathsf u(H)})}(z) & \textrm{if }H>-1
\end{cases}$$
Then one would like to show that $\Psi$ is a local diffeomorphism at every $(z,-1)$, and use a compactness argument to deduce that $\widehat\Psi$ is a diffeomorphism from $\Sigma\times [-1,-1+\epsilon')$ onto its image, up to choosing $\epsilon'<\epsilon$ sufficiently small.

Unfortunately, the differential of the map $\Psi$ written above is not a injective at the points $(z,-1)$. However, this is easily fixed by a reparameterization of the  parameter $H$. Set $t(H)=\sqrt{1+H}$, and write $H(t)=-1+t^2$ for $t>0$. Then we modify the map $\Psi$ above to a new map, that we call again $\Psi:\Omega^+\times [0,\delta)\to \Hyp^3\cup\Omega^+$ with an abuse of notation, for $\delta=\sqrt{1+\epsilon}$. It is defined by:
\begin{equation}\label{eq:map corrected}
\Psi(z,t)=\begin{cases}
z & \textrm{if }t=0 \\
\Eps_{(\mathrm{id},e^{2\mathsf u(H(t))})}(z) & \textrm{if }t>0
\end{cases}
\end{equation}

The map in \eqref{eq:map corrected} is now the expression that we would like to differentiate at points $(z,t=0)$. This is easily done using the following explicit expression for the Epstein map when $\varphi=\mathrm{id}$ and $\sigma=e^{2\eta}|dz|^2$, which is a consequence of the formula \eqref{eq:SLframe}:

\begin{align*}
		\Eps_{(\mathrm{id},\sigma)}(z)= (z,0)+ \frac{2}{e^{2\eta}+4|\eta_{z}|^{2}}(2\eta_{\bar z},e^{\eta})~.
	\end{align*}
We must apply this formula to the metric $\sigma(u)=e^{2\eta}|dz|^2=e^{2u}h_0$, for 
$$u=\mathsf u(H(t))=\mathsf v(H(t))-\frac{1}{2}\log\left(\frac{1+H(t)}{1-H(t)}\right)$$
as in \eqref{eq solutions u}. Writing $v=\mathsf v(H(t))$ and  $\tau(v)=e^{2v}h_0=e^{2\lambda}|dz|^2$, we have 
$$\eta=\lambda-\frac{1}{2}\log\left(\frac{1+H(t)}{1-H(t)}\right)$$
and therefore we obtain the expression:

\begin{equation}\label{eq:long}
\begin{split}
		\Eps_{(\mathrm{id},\sigma(\mathsf u(H(t)))}(z)&= (z,0)+ \frac{2}{e^{2\lambda}+4\frac{1+H(t)}{1-H(t)}|\lambda_{\bar z}|^{2}}\left(2\frac{1+H(t)}{1-H(t)}\lambda_{z},\sqrt{\frac{1+H(t)}{1-H(t)}}e^{\lambda}\right) \\
		&=(z,0)+ \frac{2}{e^{2\lambda}+\frac{4t^2}{2-t^2}|\lambda_{z}|^{2}}\left(\frac{2t^2}{2-t^2}\lambda_{\bar z},\sqrt{\frac{t^2}{2-t^2}}e^{\lambda}\right)~.
	\end{split}
	\end{equation}
From here, one sees that the limit as $t\to 0^+$ (that is, as $H\to -1^+$) of  $\Eps_{(\mathrm{id},\sigma(\mathsf u(H(t))))}(z)$ equals $z$. Moreover, the derivative of $\Eps_{(\mathrm{id},\sigma(\mathsf u(H(t)))}$ with respect to $t$ at $t=0$ equals $(0,\sqrt 2e^{-\varrho})$ where $\varrho$ is the density of the hyperbolic metric on $\Omega^+$ with respect to $|dz|^2$. Therefore we have
(in real coordinates on the upper half-space): 

\begin{equation}\label{eq:invertible matrix 1}
	d\Psi_{(z,0)}=\begin{pmatrix}
		1
		 & 0 & 0 \\
		0 & 1 &
		0  \\
	
		0 & 0 &	\sqrt{2}e^{-\varrho}
		
	\end{pmatrix}
	\end{equation}
which is clearly invertible.

\subsubsection{Adaptation for Theorem \ref{thm:foliation_ends}}\label{subsec:adapt2}

The above construction by Quinn is analogue to the one that we apply here, up to a modification in order to be able to choose $\epsilon'$ uniformly when
the pair $(h,\phi)$ varies in a small neighbourhood of $(h_0,\phi_0)$. For this purpose, we modify the maps above (which we denote with the same symbol by a small abuse of notation) to:

$$\Psi:\D\times [0,\delta)\times U_0\to \left(\Hyp^3\cup\partial_\infty\Hyp^3\right)\times U_0$$
defined by (recall the definition of $u(H,h,\phi)$ in \eqref{eq solutions u}):
\begin{equation}\label{eq:defi new Psi}
\Psi(z,H,h,\phi)=\begin{cases}
\left(f_{\tilde\phi}(z),h,\phi\right) & \textrm{if }t=0 \\
\left(\Eps_{(f_{\tilde\phi},e^{2\mathsf u(H(t),h,\phi)})},h,\phi\right) & \textrm{if }t>0
\end{cases}
\end{equation}
The map $\Psi$ therefore  induces a continuous map
$$\widehat\Psi:\Sigma\times [0,\delta)\times U_0\to \left(M\cup\partial_\infty^+M\right)\times U_0~.$$
The first step consists in showing that the differential of $\Psi$ (and therefore of $\widehat \Psi)$ is invertible at the points $(z,t=0)$.

\begin{lemma}
For every $z\in\D$ and every pair $(h,\phi)\in U_0$, the differential at $(z,0,h,\phi)$ of the map $\Psi:\D\times [0,\delta)\times U_0\to \left(\Hyp^3\cup\partial_\infty\Hyp^3\right)\times U_0$ defined in \eqref{eq:defi new Psi} is invertible.
\end{lemma}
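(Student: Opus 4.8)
The plan is to exploit the block-triangular structure of $d\Psi$ and then reduce to the computation already carried out in the fixed-manifold case (the paragraphs preceding \eqref{eq:invertible matrix 1}). First I would observe that, since the last factor of the target of $\Psi$ is preserved — that is, $\Psi(z,H,h,\phi) = (\Psi_1(z,H,h,\phi),h,\phi)$ with $\Psi_1$ the $(\Hyp^3\cup\partial_\infty\Hyp^3)$-component — the differential at $(z,0,h,\phi)$ has the block form
\[
d\Psi_{(z,0,h,\phi)} = \begin{pmatrix} \partial_{(z,t)}\Psi_1 & \partial_{(h,\phi)}\Psi_1 \\ 0 & \mathrm{Id} \end{pmatrix},
\]
so it is invertible if and only if $\partial_{(z,t)}\Psi_1(\cdot,\cdot,h,\phi)$ is invertible at $(z,0)$ for the fixed pair $(h,\phi)$. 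This reduces the statement to a purely ``one-manifold'' assertion.

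Next I would use the naturality of the Epstein construction under holomorphic reparameterization. The uniqueness part of Proposition~\ref{prop:eps map} yields the identity $\Eps_{(f_{\tilde\phi},\sigma)} = \Eps_{(\mathrm{id},\hat\sigma)}\circ f_{\tilde\phi}$, where $\hat\sigma = (f_{\tilde\phi}^{-1})^*\sigma$: pulling back by $f_{\tilde\phi}$ the defining relation $V_{\Eps_{(\mathrm{id},\hat\sigma)}(w)}(w) = \hat\sigma(w)$ and evaluating at $w = f_{\tilde\phi}(z)$ gives precisely the relation $(f_{\tilde\phi}^*V_{\Eps_{(\mathrm{id},\hat\sigma)}(f_{\tilde\phi}(z))})(z) = \sigma(z)$ that characterizes $\Eps_{(f_{\tilde\phi},\sigma)}(z)$. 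Recalling that $f_{\tilde\phi}\colon\D\to\Omega^+$ is a biholomorphism (it is univalent since $(h,\phi)$ lies in the image of $\QF(\Sigma)$), it follows that $\Psi_1 = \widetilde\Psi\circ(f_{\tilde\phi}\times\mathrm{id})$, where $\widetilde\Psi(w,t) = \Eps_{(\mathrm{id},\hat\sigma_t)}(w)$ for $t>0$, $\widetilde\Psi(w,0) = w$, and $\hat\sigma_t = (f_{\tilde\phi}^{-1})^*(e^{2\mathsf u(H(t),h,\phi)}h)$. As $d f_{\tilde\phi}$ is everywhere invertible, it suffices to prove that $d\widetilde\Psi$ is invertible at $(f_{\tilde\phi}(z),0)$.

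Finally I would identify $\widetilde\Psi$ with exactly the map treated in the fixed-manifold case. Since $G(-1,h,\phi,v) = 2 + 2K(e^{2v}h)$, the equation $G(-1,h,\phi,v)=0$ forces $e^{2v}h$ to have curvature $-1$, hence $v\equiv 0$, so $\mathsf v(-1,h,\phi)\equiv 0$. Using that $(f_{\tilde\phi}^{-1})^*$ commutes with scaling and that $H(t) = -1+t^2$, one has
\[
\hat\sigma_t = \frac{2-t^2}{t^2}\,(f_{\tilde\phi}^{-1})^*\!\big(e^{2\mathsf v(H(t),h,\phi)}h\big),
\]
and the metric $(f_{\tilde\phi}^{-1})^*(e^{2\mathsf v(H(t),h,\phi)}h)$ converges, as $t\to 0$, to $(f_{\tilde\phi}^{-1})^*h$, which is the complete hyperbolic metric on $\Omega^+$ (being a complete, $\Gamma$-invariant, constant curvature $-1$ conformal metric). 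Writing this metric as $e^{2\lambda}|dw|^2$ with $\lambda = \lambda(w,t)$ — which is $C^1$ in $t$ by the smooth dependence of $\mathsf v$ on $(H,h,\phi)$, with $\lambda(\cdot,0) = \varrho$ the hyperbolic density of $\Omega^+$ relative to $|dw|^2$ — I would substitute into the explicit expression \eqref{eq:long}, observing that the terms carrying $\partial_t\lambda$ are suppressed by the factors $\tfrac{t^2}{2-t^2}$ and vanish at $t=0$, so that the same computation as in \eqref{eq:invertible matrix 1} gives, in real coordinates on the upper half-space,
\[
d\widetilde\Psi_{(w,0)} = \begin{pmatrix} 1 & 0 & 0 \\ 0 & 1 & 0 \\ 0 & 0 & \sqrt{2}\,e^{-\varrho} \end{pmatrix},
\]
which is invertible. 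Combining the three steps yields invertibility of $d\Psi_{(z,0,h,\phi)}$.

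The main obstacle I anticipate is the bookkeeping in the last step: one must verify carefully that plugging the $t$-dependent conformal factor coming from $\mathsf v(H(t),h,\phi)$ into \eqref{eq:long} reproduces exactly Quinn's matrix, i.e. that all dependence on $\partial_t\lambda$ drops out at $t=0$ and that the surviving entry is $\sqrt{2}\,e^{-\varrho}$ with $\varrho$ the hyperbolic density of $\Omega^+$; this relies on the smooth dependence of $\mathsf v$ established via the implicit function theorem together with elliptic regularity. The naturality identity $\Eps_{(f_{\tilde\phi},\sigma)} = \Eps_{(\mathrm{id},\hat\sigma)}\circ f_{\tilde\phi}$, although conceptually transparent, also deserves a careful statement since $f_{\tilde\phi}$ is not a Möbius transformation.
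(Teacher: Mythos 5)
Your proposal is correct and follows essentially the same route as the paper: block-triangularize $d\Psi$, change variables via $f_{\tilde\phi}$ to reduce to the expression \eqref{eq:long}, and differentiate in $t$ at $t=0$. The only (minor) differences are that you formalize the change of variables through the global naturality identity $\Eps_{(f_{\tilde\phi},\sigma)} = \Eps_{(\mathrm{id},(f_{\tilde\phi}^{-1})^*\sigma)}\circ f_{\tilde\phi}$ (using univalence of $f_{\tilde\phi}$ for $(h,\phi)$ in the image of $\QF(\Sigma)$), whereas the paper works in a local chart where $f_{\tilde\phi_0}$ restricts to a biholomorphism, and that you spell out explicitly that $\mathsf v(-1,h,\phi)\equiv 0$ forces $\lambda(\cdot,0)$ to be the hyperbolic density of $\Omega^+$ and that the $\partial_t\lambda$ terms are killed by the $t^2/(2-t^2)$ factors; the paper leaves those two verifications implicit. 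Both versions rely on the same smooth dependence of $\mathsf v$ on $(H,h,\phi)$ coming from the implicit function theorem and elliptic regularity, so the proofs are equivalent in content.
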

\begin{proof}
We clearly have that the differential of $\Psi$ is of the form
\[
	d\Psi_{(z,0,h_0,\phi_0)}=\left(\begin{array}{c|c}
		d\Psi_{(z,0)}(\cdot,\cdot,h_0,\phi_0)
		 & \star \\
		 \hline
		0 & \mathbbm 1
	\end{array}\right)
	\]
Hence it suffices to check that the differential of $\Psi(\cdot,\cdot,h_0,\phi_0)$ is invertible, namely, to compute the derivatives with respect to $z$ and $t$ keeping $h$ and $\phi$ fixed.	For this, we can actually reduce to the computation we performed to obtain \eqref{eq:invertible matrix 1}. Indeed, since $f_{\tilde\phi_0}$ is a locally injective holomorphic function, we can change variables from $z$ to $w:=f_{\tilde\phi_0}(z)$ in a small open set on which $f_{\tilde\phi_0}$ is a biholomorphism onto its image. We can then consider $u$, $v$, $\eta$ and $\lambda$ as functions of $w$ instead of $z$, up to composing with a local inverse of $f_{\tilde\phi_0}$. (Of course here $\mathsf u$ and $\mathsf v$ are functions not only of $(z,H)$ but also of $(h,\phi)$, but since we are differentiating with $(h,\phi)$ fixed, the result will remain exactly the same.)

We then obtain, as in \eqref{eq:long}, 
$$\Eps_{(f_{\tilde\phi_0},\sigma(\mathsf u(H(t),h_0,\phi_0))}(w)=(w,0)+ \frac{2}{e^{2\lambda}+\frac{4t^2}{2-t^2}|\lambda_{w}|^{2}}\left(\frac{2t^2}{2-t^2}\lambda_{\bar w},\sqrt{\frac{t^2}{2-t^2}}e^{\lambda}\right)~.$$

Differentiating as above, we obtain the same expression as in \eqref{eq:invertible matrix 1}, which is invertible. Since $w$ is a local coordinate and the choice of $(h_0,\phi_0)$ is arbitrary, the differential of $\Psi$ is invertible at the point $(z,0,h,\phi)$ for any $z,h,\phi$.
\end{proof}
	
Therefore, $\widehat \Psi$ is a local diffeomorphism in a neighbourhood of every point $(z,0,h,\phi)$. We now prove an easy topological lemma.

\begin{lemma}\label{topologylemma}
	Let $X$ be a metrizable compact topological space, $Y$ any topological space and $V$ an open subset of $\mathbb{R}^{n}$ containing the origin. Let $F: X \times V \rightarrow Y$ be a continuous map such that\begin{itemize} 
		\item  $F|_{X\times \left\lbrace 0\right\rbrace }$ is injective and 
		\item $F$ is locally injective at any $(x,0)\in X\times\{0\}$. \end{itemize}Then there exists a neighbourhood $V' \subset V $ of the origin such that $F|_{X\times V'}$ is injective.
\end{lemma}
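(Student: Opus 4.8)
The plan is to argue by contradiction using the compactness of $X$. Suppose no such $V'$ exists: then for every $n \in \mathbb{N}$ we can find two distinct points in $X \times V$ with equal $F$-images and with the $V$-coordinates tending to $0$. Concretely, taking a sequence of shrinking balls $V_n \subset V$ around the origin, we obtain points $(x_n, a_n), (y_n, b_n) \in X \times V$ with $(x_n, a_n) \neq (y_n, b_n)$, with $a_n, b_n \to 0$, and with $F(x_n, a_n) = F(y_n, b_n)$. Since $X$ is compact and metrizable, hence sequentially compact, up to passing to a subsequence we may assume $x_n \to x_\infty$ and $y_n \to y_\infty$ for some $x_\infty, y_\infty \in X$. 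By continuity of $F$ and the fact that $a_n, b_n \to 0$, we get $F(x_\infty, 0) = \lim F(x_n, a_n) = \lim F(y_n, b_n) = F(y_\infty, 0)$.

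Now I would split into two cases according to whether $x_\infty = y_\infty$ or not. If $x_\infty \neq y_\infty$, then $F(x_\infty, 0) = F(y_\infty, 0)$ directly contradicts the injectivity of $F|_{X \times \{0\}}$. If instead $x_\infty = y_\infty =: x_\infty$, then both sequences $(x_n, a_n)$ and $(y_n, b_n)$ converge to the same point $(x_\infty, 0) \in X \times \{0\}$; but $F$ is locally injective at $(x_\infty, 0)$, so there is a neighbourhood of $(x_\infty, 0)$ on which $F$ is injective. For $n$ large enough both $(x_n, a_n)$ and $(y_n, b_n)$ lie in this neighbourhood, and since $F(x_n, a_n) = F(y_n, b_n)$, local injectivity forces $(x_n, a_n) = (y_n, b_n)$, contradicting our choice of distinct points. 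Either way we reach a contradiction, which proves the existence of the desired $V'$.

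The only mild subtlety — and the step I would be most careful about — is the extraction of the convergent subsequences and making sure the contradiction is correctly set up: one must pass to a common subsequence along which both $x_n$ and $y_n$ converge, and keep track that the "distinctness" assumption $(x_n, a_n) \neq (y_n, b_n)$ survives. It is worth noting that the $V$-coordinates need not converge to the same limit a priori, but since both $a_n$ and $b_n$ are forced to $0$ by the way the bad neighbourhoods shrink, this is not an issue. Everything else is a routine application of sequential compactness together with the two hypotheses on $F$; no quantitative estimate is needed, only the topological structure.
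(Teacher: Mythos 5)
Your proof is correct and follows essentially the same argument as the paper: proof by contradiction, extract convergent subsequences via sequential compactness of $X$, use continuity together with injectivity of $F|_{X\times\{0\}}$ to conclude the limit points coincide, then invoke local injectivity at that limit point to contradict the distinctness of the sequence terms. The only cosmetic difference is that you present the deduction $x_\infty = y_\infty$ as a case split, whereas the paper derives it directly from the injectivity hypothesis before applying local injectivity.
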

\begin{proof}
	Assume that there exists no such neighbourhood $V'$ where $F|_{X\times V'}$ is injective. Then there exist sequences $(x_{n},t_{n})_{n\in\mathbb N}$ and $(x'_{n},t'_{n})_{n\in\mathbb N}$ with $t_{n},t'_{n}\rightarrow 0$ such that $(x_n,t_n)\neq (x_n',t_n')$ and $F(x_{n},t_{n})=F(x'_{n},t'_{n})$. Since $X$ is metrizable and compact, it is sequentially compact, and we can extract a convergent subsequence from both $(x_{n})_{n\in\mathbb N}$ and $(x'_{n})_{n\in\mathbb N}$. Let the respective limit points be $x_{\infty}$ and $x'_{\infty}$.  By continuity of $F$ we have that $F(x_{\infty},0)=F(x'_{\infty},0)$ which  implies that $x_{\infty}=x'_{\infty}$ since $F|_{X\times \{0\}}$ is injective. But $F$ is assumed to be locally injective in a neighbourhood of $(x_{\infty},0)$, which means that for $n$ large enough,  $(x_{n},t_{n})=(x'_{n},t'_{n})$. This gives a contradiction. 
\end{proof}

We are now ready to conclude the proof of Theorem \ref{thm:foliation_ends}. Indeed by Lemma \ref{topologylemma} the map $\widehat \Psi$ is an injective local diffeomorphism, if we restrict further its domain of definition, choosing smaller $\delta$ and $U_0$. Hence it is a diffeomorphism onto its image. In particular, composing with the projection to the first factor $M\cup\partial_\infty^+M$ gives a diffeomorphism from $\Sigma\times[0,\delta)$ to its image for all $(h,\phi)$ in $U_0.$ 
Since $H(t)=-1+t^2$ is a diffeomorphism between $(0,\delta)$ and $(-1,-1+\epsilon)$ for $\epsilon=-1+\delta^2$, we have that for every $(h,\phi)$ in $U_0$ and every $H\in(-1,-1+\epsilon)$ the Epstein maps $\Eps_{(f_{\tilde\phi},\sigma(\mathsf u(H,h,\phi))}$ induce a smooth family of embeddings in the quasi-Fuchsian manifold $M$ corresponding to $(h,\phi)$ of constant mean curvature $H$. 

Of course, the same argument can be repeated for $H$ close to $-1$, obtaining a monotone CMC foliation of a neighbourhood of $\partial_\infty^-M$. Clearly, up to choosing a smaller $\epsilon$ and a smaller $U_0$, we can assume that the regions of $m\in U_0$ foliated by surfaces with CMC in $(-1,-1+\epsilon)$ and in $(1-\epsilon,1)$ are disjoint. This means that for every $m\in U_0$, these CMC surfaces foliate the complement of a compact set homeomorphic to $\Sigma\times I$. 
This concludes Theorem \ref{thm:foliation_ends}.

\subsection{Existence in the compact part}\label{sec:existence_compact}

We now prove the existence of CMC surfaces, with mean curvature in $(-1,1)$, in a neighbourhood of any \emph{Fuchsian} manifold. Again, we will need to have some (although very weak) local uniform control on the value of the mean curvature, as in the following statement.

\begin{theorem}\label{thm:existence_compact}
	Let $\Sigma$ be a closed oriented surface of genus $\geq 2$, $H_0\in (-1,1)$ and $m\in\F(\Sigma)$. Then there exists a neighbourhood $U_{H_0}$ of $m$ in $\QF(\Sigma)$ and a constant $\epsilon=\epsilon(m,U_{H_0},H_0)$ such that, for every $H\in (H_0-\epsilon,H_0+\epsilon)$, every quasi-Fuchsian manifold in $U_{H_0}$ contains CMC surfaces with mean curvature $H$, which vary smoothly with respect to $H$. Moreover, we can assume that all such CMC surfaces have principal curvatures in $(-1,1)$. 
\end{theorem}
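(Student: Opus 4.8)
The plan is to run the implicit function theorem scheme of Section~\ref{subsec:adapt}, but with a different base point: instead of a leaf ``at infinity'' ($H=-1$), we take as initial solution the umbilical CMC leaf of mean curvature $H_0$ inside the Fuchsian manifold $m$. First I would identify this leaf. Writing $m=\Hyp^3/\Gamma$ with $\Gamma$ preserving a totally geodesic plane $P$, the domain $\Omega^+$ is a round disc, the biholomorphism $f_{\tilde\phi_0}\colon\D\to\Omega^+$ is a M\"obius transformation, hence its Schwarzian descends to $\phi_0=0$, and the first Bers parameter is the Poincar\'e metric $h_0$ on $\Sigma$. Since the Poincar\'e metric of a round disc is M\"obius flat, $B(h_0)=0$. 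Plugging $\varphi=f_{\tilde\phi_0}$ and $\sigma=c\,h_0$ (with $c>0$ a constant) into \eqref{meancurveq2}, and using $K(c\,h_0)=-1/c$, $B(c\,h_0)=B(h_0)=0$ by \eqref{eq:scale invariance}, and $S(f_{\tilde\phi_0})\equiv 0$, one finds $\mathcal H(\Eps_{(f_{\tilde\phi_0},\,c\,h_0)})=(1-c)/(1+c)$; this equals $H_0$ exactly when $c=(1-H_0)/(1+H_0)$, in which case $\Eps_{(f_{\tilde\phi_0},\,c\,h_0)}$ is the equidistant surface from $P$ at signed distance $\arctanh(H_0)$ --- embedded and umbilical, with both principal curvatures equal to $H_0\in(-1,1)$. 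In the notation of \eqref{eq:us2}, the change of variables \eqref{eq:change variables} sends this solution to $v_0\equiv 0$; indeed $\tau_{h_0}(0)=h_0$ has curvature $-1$ and $B(h_0)-\phi_0/2=0$, so in fact $G(H,h_0,\phi_0,0)=0$ for \emph{every} $H$, in particular for $H=H_0$.

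Next I would apply the implicit function theorem to the smooth map $G\colon\R\times W\times W^{2,s}(\Sigma,h_0)\to W^{2,s-2}(\Sigma,h_0)$ of Section~\ref{subsec:adapt} at $(H_0,h_0,\phi_0,0)$. Differentiating \eqref{eq:us2} in $v$ at this point, the curvature-dependent terms combine to give $2(2\dot v-\Delta_{h_0}\dot v)$, exactly the right-hand side of \eqref{eq:differential}, while the term $\|B(\tau_{h_0}(v))-\phi_0/2\|^2_{\tau_{h_0}(v)}$ has vanishing first variation at $v_0=0$, because the quadratic differential $B(\tau_{h_0}(v))-\phi_0/2$ vanishes at $v=0$ (using $B(h_0)=0$ and $\phi_0=0$) and hence so does the derivative of its squared pointwise norm. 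Thus $d_vG_{(H_0,h_0,\phi_0,0)}(\dot v)=2(2\dot v-\Delta_{h_0}\dot v)$ --- remarkably independent of $H_0$ --- which by Lemma~\ref{lem filippo} (with $f\equiv 2$) is a continuous linear isomorphism $W^{2,s}(\Sigma,h_0)\to W^{2,s-2}(\Sigma,h_0)$ for all $s\geq 2$. The implicit function theorem for Banach spaces then provides $\epsilon>0$, a neighbourhood $U_{H_0}$ of $(h_0,\phi_0)$ in $W$ --- equivalently, of $m$ in $\QF(\Sigma)$ --- and a map $\mathsf v\colon(H_0-\epsilon,H_0+\epsilon)\times U_{H_0}\to W^{2,s}(\Sigma,h_0)$ with $\mathsf v(H_0,h_0,\phi_0)=0$ and $G(H,h,\phi,\mathsf v(H,h,\phi))=0$; elliptic regularity then upgrades $\mathsf v$ to a smooth family of smooth functions, exactly as in Section~\ref{subsec:adapt}. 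Setting $\mathsf u(H,h,\phi)=\mathsf v(H,h,\phi)-\tfrac12\log\big(\tfrac{1+H}{1-H}\big)$ as in \eqref{eq solutions u}, the Epstein maps $\Eps_{(f_{\tilde\phi},\,e^{2\mathsf u(H,h,\phi)}h)}$ are $\Gamma$-equivariant (Remark~\ref{rmk:quinn2}) and descend to CMC surfaces of mean curvature $H$, depending smoothly on $(H,h,\phi)$, in the quasi-Fuchsian manifold with parameter $(h,\phi)$. (Note that $\mathsf v(H,h_0,\phi_0)\equiv 0$, so over the Fuchsian manifold $m$ this recovers its equidistant foliation.)

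Finally I would establish the ``moreover'' clause. At the base point the Epstein surface is embedded and umbilical with both principal curvatures equal to $H_0\in(-1,1)$; by Proposition~\ref{prop:eps map} (regularity of Epstein maps) together with the smooth dependence of $\mathsf u$ on all variables, the immersion and its second fundamental form depend continuously, in $C^0(\Sigma)$ and after choosing $s$ large enough, on $(H,h,\phi)$. Since $\Sigma$ is compact and $(-1,1)$ is open with $H_0$ an interior point, shrinking $\epsilon$ and $U_{H_0}$ ensures that for every $(H,h,\phi)$ in the (product) neighbourhood the Epstein map is an immersion and the principal curvatures of the resulting leaf stay in $(-1,1)$; embeddedness then follows as well --- e.g.\ because the normal exponential map of a closed surface with principal curvatures in $(-1,1)$ in a quasi-Fuchsian manifold has everywhere positive Jacobian, as in Uhlenbeck's almost-Fuchsian argument --- or is otherwise deferred to the gluing argument of Section~\ref{sec:finish}.

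I expect the main difficulty to be quantitative rather than conceptual: ensuring that the implicit-function-theorem neighbourhood is of product type $(H_0-\epsilon,H_0+\epsilon)\times U_{H_0}$ with $\epsilon$ and $U_{H_0}$ independent of the quasi-Fuchsian manifold as it ranges over $U_{H_0}$ --- which is built into the Banach-space statement of the theorem once $G$ is checked to be jointly smooth --- and tracking Sobolev indices carefully enough that the solutions are regular for the $C^2$-convergence of immersions needed to control the principal curvatures. The only genuinely new computational input compared with the ``ends'' case of Sections~\ref{subsec:outline1}--\ref{subsec:adapt} is the vanishing of the first variation of the $\|B(\tau)-\phi/2\|^2$ term at the Fuchsian base point, which rests on $B(h_0)=0$ (M\"obius flatness of the Poincar\'e metric on a round disc) and $\phi_0=0$; everything else runs in parallel with those sections.
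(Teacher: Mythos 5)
Your proposal follows essentially the same route as the paper: base the implicit function theorem at the umbilical equidistant CMC leaf of the Fuchsian manifold (where $\phi_0=0$, $h_0$ is the Poincar\'e metric, $B(h_0)=0$, and hence the $\|B(\tau)-\phi/2\|^2$-term has vanishing first variation), invoke Lemma~\ref{lem filippo}, and use openness together with the umbilicity of the base leaf for the ``moreover'' clause. As a side note, your computed coefficient $2$ in $d_vG_{(H_0,h_0,\phi_0,0)}(\dot v)=2(2\dot v-\Delta_{h_0}\dot v)$ is correct and indeed independent of $H_0$; the paper's displayed factor $(1-H_0)$ drops a factor of $2$ from $\tfrac{d}{dv}K^2=2K\,\tfrac{dK}{dv}$ at $K=-1$, a harmless slip since both expressions are nonzero for $H_0\in(-1,1)$.
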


To prove Theorem \ref{thm:existence_compact}, we will use a similar setting as in Section \ref{sec:foliation_ends}. Roughly, the main idea is to use the implicit function theorem in order to deform the solutions to the CMC problem in a Fuchsian manifold, which are given by umbilical surfaces equidistant from the totally geodesic surface, to solutions to the CMC problem in nearby manifolds and for nearby values of the mean curvature.

\begin{proof}
The proof is very similar to Section \ref{subsec:adapt}.  After the change of variables from $(H,h,\phi,u)$ to $(H,h,\phi,v)$, where $v$ is defined in Equation \eqref{eq:change variables}, the equation of constant mean curvature equal to $H$ for the Epstein map $\Eps_{(f_{\tilde\phi},\sigma_{h}(u))}$ is equivalent to Equation \eqref{eq:us2}, which we rewrite here for the sake of convenience:
$$
	G(H,h,\phi,v):= 1-H -2HK(\tau_h(v) ) + (-1-H)\left(K(\tau_h(v))^{2}- 16\|B(\tau_h(v))-\phi/2\|^{2}_{\tau_h(v)}\right)=0~,
$$
for $\tau_h(v)=e^{2v}h$. We consider again $G$ as a map from $\R\times W\times W^{2,s}(\Sigma,h_0)$ to $W^{2,s - 2}(\Sigma,h_0)$, where $h_0$ is some fixed hyperbolic metric on $\Sigma$. One checks directly that, for any $H_0\in (-1,1)$, the point $(H_0,h_0,\phi_0,v_0)$ is a solution, where $v_0\equiv 0$ and $\phi_0\equiv 0$. This uses that $B(h_0)=0$ because $h_0$ lifts to the Poincar\'e metric on $\D$, which is M\"obius flat, as discussed in Section \ref{subsec:mob flat}. Of course this solution corresponds geometrically to the umbilical CMC surface in the Fuchsian manifold, obtained as an equidistant surface from the totally geodesic surface. 

Hence to apply the implicit function theorem for Banach spaces, we differentiate $G$ with respect to $v$. The differential of the term $\|B(\tau_{h}(v))-\phi/2\|^{2}_{\tau_h(v)}$ vanishes because 
$$B(\tau_{h_0}(v_0))-\phi_0/2=0~,$$ for the same reason as above. We therefore have, similarly to the proof of Theorem \ref{thm:foliation_ends} (see Equation \eqref{eq:differential}):

\begin{align*}
	d_{v}G_{(H_0,h_0,\phi_0,v_0)} &= (-2H_0+1+H_0) \left.\frac{d}{dv}\right|_{v=v_0} (K(e^{2v}h)) \\
	&= (1-H_0)(2\dot v-\Delta_{h_0}\dot v)
\end{align*} 
Since $H_0\neq 1$, $d_{v}G_{(H_0,h_0,\phi_0,v_0)}$ is invertible by Lemma \ref{lem filippo}, and we therefore obtain a family $\mathsf v:[-1,1+\epsilon)\times U_0\to W^{2,s}(\Sigma,h_0)$ of smooth solutions, depending smoothly on $H$. 

Define $\mathsf u:[-1,1+\epsilon)\times U_0\to W^{2,s}(\Sigma,h_0)$ as in \eqref{eq solutions u}. We claim that the Epstein map $\Eps_{(\tilde f_{\phi_0},e^{2\mathsf u(H_0,h_0,\phi_0)}h_0)}=\Eps_{(\mathrm{id},e^{2u_0}h_0)}$, where 
$$u_0=\mathsf u(H_0,h_0,\phi_0)=-\frac{1}{2}\log\frac{1+H_0}{1-H_0}~,$$ 
is an immersion with first fundamental form equal to a multiple of the hyperbolic metric $h_0$. This is of course what we expect since the geometric meaning of the solution $(H_0,h_0,\phi_0,v_0)$ is the umbilical CMC surface that descends to an equidistant surface from the totally geodesic surface in the Fuchsian manifold. The claim can actually be checked without any computation, because the Poincar\'e metric on $\D$, the vanishing quadratic differential $\phi_0$ and the constant function $u_0$ are all invariant under the group of biholomorphisms of $\D$. Hence one can use the uniqueness property in Proposition \ref{prop:eps map} to deduce that there exists a surface $S$ in $\Hyp^3$, equidistant from the totally geodesic plane whose boundary  coincides with $\partial\D$, such that Epstein map $\Eps_{(\mathrm{id},e^{2u_0}h_0)}$ is the unique embedding $\iota:\D\to S\subset\Hyp^3$ satisfying 
$$\iota\circ \zeta=\zeta\circ\iota$$
for every biholomorphism $\zeta$ of $\D$.

Since being an immersion is an open condition, up to restricting the neighbourhood $U_{H_0}$ and taking a smaller $\epsilon$, we can therefore assume that all Epstein maps
$$\Eps_{(f_{\tilde\phi},e^{2\mathsf u(H,h,\phi)})}:\D\to\Hyp^3$$
 are immersions, which have constant mean curvature equal to $H$ by construction.
Hence these Epstein maps induce CMC surfaces in the quotient quasi-Fuchsian manifolds corresponding to the points $(h,\phi)$ in a neighbourhood of $(h_0,\phi_0)$.

The ``moreover'' part of the statement follows again by continuity, up to restricting the neighbourhood $U_{H_0}$ and taking a smaller $\epsilon$, since the principal curvatures of the umbilical CMC surface with mean curvature $H$ are both equal to $H$, and therefore smaller than one in absolute value.
\end{proof}

	\subsection{Conclusion of existence in a small neighbourhood}\label{sec:existence}
	
	Based on Theorems \ref{thm:foliation_ends} and \ref{thm:existence_compact}, we are now ready to prove the existence of CMC surfaces for each value of the mean curvature in $(-1,1)$, in a suitable neighbourhood of the Fuchsian locus.
	
	\begin{theorem}\label{thm:existence}
		Let $\Sigma$ be a closed oriented surface of genus $\geq 2$. Then there exists a neighbourhood $U$ of the Fuchsian locus in quasi-Fuchsian space $\QF(\Sigma)$ such that, for every $H\in (-1,1)$, every quasi-Fuchsian manifold in $U$ contains an embedded CMC surface of mean curvature $H$.
	\end{theorem}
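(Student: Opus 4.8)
The plan is to deduce Theorem \ref{thm:existence} by patching together the two local existence statements already obtained, namely Theorem \ref{thm:foliation_ends} (existence near the ends, with mean curvature in $(-1,-1+\epsilon)\cup(1-\epsilon,1)$) and Theorem \ref{thm:existence_compact} (existence of CMC surfaces of mean curvature near any fixed $H_0\in(-1,1)$, in a neighbourhood of a Fuchsian manifold), by a compactness argument over the closed interval $[-1,1]$ of mean curvature values together with a compactness/diagonal argument over the Fuchsian locus.

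First I would fix a Fuchsian manifold $m\in\F(\Sigma)$. Theorem \ref{thm:foliation_ends} produces a neighbourhood $U_0(m)$ and a constant $\epsilon_0(m)>0$ so that every manifold in $U_0(m)$ contains CMC surfaces of every mean curvature in $(-1,-1+\epsilon_0(m))\cup(1-\epsilon_0(m),1)$. For the remaining compact range, I would cover the closed interval $[-1+\epsilon_0(m)/2,\,1-\epsilon_0(m)/2]$ by finitely many intervals $(H_i-\epsilon_i,H_i+\epsilon_i)$, $i=1,\dots,N$, obtained by applying Theorem \ref{thm:existence_compact} at $m$ for each $H_i$, each coming with a neighbourhood $U_{H_i}(m)$ of $m$; the existence of a finite subcover uses that $[-1+\epsilon_0(m)/2,1-\epsilon_0(m)/2]$ is compact. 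Setting $U(m):=U_0(m)\cap\bigcap_{i=1}^{N}U_{H_i}(m)$, every quasi-Fuchsian manifold in $U(m)$ contains an embedded CMC surface of mean curvature $H$ for \emph{every} $H\in(-1,1)$: indeed such an $H$ either lies in $(-1,-1+\epsilon_0(m))\cup(1-\epsilon_0(m),1)$, handled by Theorem \ref{thm:foliation_ends}, or in the compact range, hence in some $(H_i-\epsilon_i,H_i+\epsilon_i)$, handled by Theorem \ref{thm:existence_compact}. (One should note that Theorem \ref{thm:foliation_ends} a priori produces CMC immersions; the fact that, after shrinking, these are embeddings is exactly what was established in Section \ref{sec:fol ends}, while Theorem \ref{thm:existence_compact} already yields embedded surfaces after shrinking the neighbourhood. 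Strictly speaking one wants embedded leaves here, so I would invoke the embeddedness discussion from Section \ref{sec:fol ends} for the ends, shrinking $U_0(m)$ and $\epsilon_0(m)$ if necessary.)

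Finally I would take the union $U:=\bigcup_{m\in\F(\Sigma)}U(m)$, which is an open neighbourhood of the Fuchsian locus in $\QF(\Sigma)$ with the required property: any quasi-Fuchsian manifold in $U$ lies in some $U(m)$, hence contains an embedded CMC surface of mean curvature $H$ for every $H\in(-1,1)$. This completes the proof.

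The main obstacle here is essentially bookkeeping rather than analysis: one must be careful that the constants $\epsilon_0(m)$ from the ends and the $\epsilon_i$ from the compact part are compatible so that the union of the two ranges $(-1,-1+\epsilon_0(m))\cup(1-\epsilon_0(m),1)$ and $(-1+\epsilon_0(m)/2,1-\epsilon_0(m)/2)$ genuinely covers $(-1,1)$, and that the finitely many neighbourhoods from the compact part are intersected before taking the union over $m$. There is no difficulty in passing from the local neighbourhoods $U(m)$ to a single neighbourhood $U$ of the whole Fuchsian locus, since we simply take the union; unlike in the proof of the full foliation statement (Theorem \ref{thm:foliation}), here we do not need the leaves to fit together into a foliation, so no further geometric maximum principle argument is required at this stage.
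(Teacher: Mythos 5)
Your proposal is correct and follows essentially the same route as the paper: fix a Fuchsian $m$, combine Theorem \ref{thm:foliation_ends} near $\pm1$ with Theorem \ref{thm:existence_compact} on the compact interior range, use compactness to extract a finite subcover, intersect the finitely many neighbourhoods, and finally take a union over $m\in\F(\Sigma)$. One small wrinkle in your parenthetical aside: it is actually the surfaces from Theorem \ref{thm:existence_compact}, not those from Theorem \ref{thm:foliation_ends} (whose leaves are embeddings by Definition \ref{defi foliation}), whose embeddedness requires an additional remark — and the paper supplies it via the ``moreover'' clause of Theorem \ref{thm:existence_compact} (small principal curvatures) combined with Proposition \ref{prop:small}\ref{item1}.
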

	\begin{proof}
		We will show that, for every $m\in\F(\Sigma)$, there exists a neighbourhood $V=V(m)$ of $m$ in $\QF(\Sigma)$ such that every $m'$ in $V$ contains embedded CMC surfaces for all $H\in (-1,1)$. Taking the union of $V(m)$ as   $m$ varies in $\F(\Sigma)$ clearly provides the claimed neighbourhood of the Fuchsian locus.
		
		Let us fix a convenient notation. For the sake of simplicity, we fix $m$ in $\F(\Sigma)$, and we will omit every dependence on $m$. Theorems \ref{thm:foliation_ends} and \ref{thm:existence_compact} provide us with:
		\begin{enumerate}
			\item A neighbourhood $\widehat U$ of $m$ and a constant $\widehat \epsilon$ such that all quasi-Fuchsian manifolds in $\widehat U$ contain embedded CMC surfaces with mean curvature $H$ ranging in $(-1,-1+\widehat\epsilon)\cup (1-\widehat\epsilon,1)$, and
			\item For every $H_0\in(-1,1)$, a neighbourhood $U_{H_0}$ of $m$ and a constant $\epsilon_{H_0}$ such that   all quasi-Fuchsian manifolds in $\widehat U$ contain immersed CMC surfaces with mean curvature $H$ ranging in $(H_0-\epsilon_{H_0},H_0+\epsilon_{H_0})$ (clearly, $\epsilon_{H_0}$ will be small enough so that $(H_0-\epsilon_{H_0},H_0+\epsilon_{H_0})\subset (-1,1)$).
		\end{enumerate}
		
Actually, in item (2), we can assume that the immersed CMC surfaces have  principal curvatures in $(-1,1)$. This implies automatically that they are embedded, see item \ref{item1} of Proposition \ref{prop:small} below.
		
		Now, the family of intervals 
		$$\mathcal F:=\left\{[-1,-1+\widehat\epsilon)\right\}\cup \left\{(1-\widehat\epsilon,1]\right\}\cup\{(H_0-\epsilon_{H_0},H_0+\epsilon_{H_0})\,|\,H_0\in(-1,1)\}$$
		is an open covering of the compact interval $[-1,1]$, hence it admits a finite subcover
		$$\mathcal F':=\left\{[-1,-1+\widehat\epsilon)\right\}\cup \left\{(1-\widehat\epsilon,1]\right\}\cup\{(H_0-\epsilon_{H_0},H_0+\epsilon_{H_0})\,|\,H_0\in\{c_1,\ldots,c_N\}\}~.$$
		Therefore the intersection
		$$U:=\widehat U\cap U_{c_1}\cap\ldots\cap U_{c_N}$$
		is an open neighbourhood of $m$ in $\QF(\Sigma)$ with the property that for every $H\in (-1,1)$ and for every $m'$ in $U$ there exists an embedded CMC surface with constant mean curvature $H$. This concludes the proof.
	\end{proof}
	
	In the next section, we will improve the proof of Theorem \ref{thm:existence} in order to prove that the neighbourhood $U$ can be taken so as to have the property that the embedded CMC surfaces of each quasi-Fuchsian manifold $M$ in $U$ constitute a smooth monotone foliation of $M$.

\section{Foliations of quasi-Fuchsian manifolds}\label{sec:finish}

Having established the existence of embedded CMC surfaces, for $H\in (-1,1)$, in a quasi-Fuchsian manifold in a suitably small neighbourhood of the Fuchsian locus, we now refine the construction to show that, in a possibly smaller neighbourhood, there is a monotone smooth foliation by CMC surfaces.

\subsection{Small principal curvatures and equidistant foliations}

We will say that a $C^2$ immersion of a surface in $\Hyp^3$ has small principal curvatures if its principal curvatures are in $(-1,1)$. The following statement contains the fundamental properties that we will use on surfaces with small principal curvatures.

\begin{prop}\label{prop:small}
Let $\Sigma$ be a closed surface and let $\iota:\Sigma\to M$ be an immersion with small principal curvatures in a quasi-Fuchsian manifold $M$ homeomorphic to $\Sigma\times\R$. Then:
\begin{enumerate}
\myitem{$i)$}  \label{item1} The immersion $\iota$ is an embedding and a homotopy equivalence.
\myitem{$ii)$} \label{item2}There is a diffeomorphism $\zeta:\Sigma\times\R\to M$ such that $\zeta(\cdot,0)=\iota$, $\zeta(p,\cdot)$ is the unit speed geodesic intersecting $\iota(\Sigma)$ orthogonally at $\iota(p)$, and 
\begin{equation}\label{eq:distance}
d_M(\zeta(p,r_1),\zeta(p,r_2))=d_M(\zeta(\Sigma\times\{r_1\}),\zeta(p,r_2))=|r_2-r_1|~.
\end{equation}
\end{enumerate}
Let us choose such $\zeta$ so that $\zeta(\cdot,r)$ approaches $\partial^-_\infty M$ as $r\to+\infty$. If moreover $\iota$ has constant mean curvature $H$, then
\begin{enumerate}[resume]
\myitem{$iii)$} \label{item3} The mean curvature of the surface $\zeta(\Sigma\times\{r\})$ is strictly larger than $H$ if $r>0$ and strictly smaller than $H$ if $r<0$.
\myitem{$iv)$} \label{item4} There exist differentiable functions $f_-,f_+:\R \to \R$ satisfying $f_\pm(0)=H$ and $f'_\pm(r)>0$ for all $r$, such that the mean curvature of $\zeta(\Sigma\times\{r\})$ is between $f_-(r)$ and $f_+(r)$.
\end{enumerate}
\end{prop}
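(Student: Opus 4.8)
\textbf{Proof plan for Proposition \ref{prop:small}.}

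The plan is to establish the four items roughly in order, since each relies on the previous ones. For item \ref{item1}, the key fact is that an immersed surface with principal curvatures in $(-1,1)$ admits a well-defined normal exponential map that is a diffeomorphism onto $M$; this is the content (in the universal cover) of the classical result of Epstein--Marden or Uhlenbeck. Concretely, I would lift $\iota$ to an immersion $\widetilde\iota:\widetilde\Sigma\to\Hyp^3$ equivariant with respect to $\pi_1(\Sigma)\to\Gamma$, and show that the map $(p,r)\mapsto\exp_{\widetilde\iota(p)}(r\,\nu(p))$, where $\nu$ is the unit normal, is a diffeomorphism from $\widetilde\Sigma\times\R$ onto $\Hyp^3$: injectivity and the fact that it is an immersion both follow from the condition $|\lambda_i|<1$, since the differential of the normal flow in the direction of a principal curvature line has the form $\cosh r+\lambda_i\sinh r$, which never vanishes. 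Equivariance then passes the diffeomorphism to the quotient, giving $\zeta:\Sigma\times\R\to M$ and simultaneously proving that $\iota=\zeta(\cdot,0)$ is an embedding homotopic to a fiber $\Sigma\times\{*\}$, hence a homotopy equivalence. This also yields item \ref{item2}: the curves $\zeta(p,\cdot)$ are unit-speed geodesics orthogonal to $\iota(\Sigma)$ by construction, and the distance identity \eqref{eq:distance} follows because in $\Hyp^3$ the equidistant hypersurfaces from the lift $\widetilde\iota(\widetilde\Sigma)$ form a foliation realizing the distance, so no shortcut between two leaves exists.

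For items \ref{item3} and \ref{item4}, the main tool is the Riccati equation governing the evolution of the shape operator $S_r$ of the equidistant surface $\zeta(\Sigma\times\{r\})$ along the normal geodesic. Writing the second fundamental form in terms of the shape operator, the standard computation in a space form of curvature $-1$ gives $S_r'=-S_r^2+\mathrm{Id}$ (with the appropriate sign convention for the normal direction pointing towards $\partial_\infty^-M$), hence for each principal direction the principal curvature $\lambda_i(r)$ evolves by $\lambda_i'(r)=1-\lambda_i(r)^2$. Since $|\lambda_i(0)|<1$, the solution stays in $(-1,1)$ for all $r$ and is strictly increasing; explicitly $\lambda_i(r)=\tanh(r+c_i)$ for the constant $c_i=\operatorname{arctanh}\lambda_i(0)$. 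The mean curvature of $\zeta(\Sigma\times\{r\})$ is $\tfrac12(\lambda_1(r)+\lambda_2(r))$, which is therefore strictly increasing in $r$ pointwise on $\Sigma$; in particular if $\iota$ has constant mean curvature $H$ then at every point the mean curvature of $\zeta(\Sigma\times\{r\})$ is $>H$ for $r>0$ and $<H$ for $r<0$, which is item \ref{item3}. For item \ref{item4}, I would let $\mu_-(p)\le\mu_+(p)$ denote the two principal curvatures of $\iota$ at $p$ and observe that, since each $\lambda_i(r)=\tanh(r+\operatorname{arctanh}\lambda_i(0))$ is a monotone function of the pair $(r,\lambda_i(0))$, the mean curvature at $(p,r)$ lies between $f_-(r):=\tanh(r+\operatorname{arctanh}(\min_\Sigma\mu_-))$ and $f_+(r):=\tanh(r+\operatorname{arctanh}(\max_\Sigma\mu_+))$ — using compactness of $\Sigma$ to guarantee these extrema are attained and lie in $(-1,1)$. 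These functions are differentiable, satisfy $f_\pm(0)=H$ only if the surface is umbilic, so more carefully one sets $f_\pm(r)=\tfrac12\big(\tanh(r+\operatorname{arctanh}(\min\mu_-))+\tanh(r+\operatorname{arctanh}(\min\mu_+))\big)$ for $f_-$ and the analogous expression with $\max$ for $f_+$, so that $f_\pm(0)=H$ and $f'_\pm(r)>0$ by monotonicity of $\tanh$.

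The step I expect to require the most care is the global injectivity of the normal exponential map in item \ref{item1}: local injectivity is immediate from the Riccati/Jacobi computation, but ruling out that two distinct normal geodesics from the lifted surface meet requires the genuinely global argument that the equidistant hypersurfaces from a complete surface with small principal curvatures in $\Hyp^3$ are embedded and pairwise disjoint. I would handle this by citing the relevant result from the literature (Epstein--Marden, or the treatment in Uhlenbeck's paper, or the convexity-type argument via the support function of the equidistant sets), rather than reproving it; once global injectivity on $\widetilde\Sigma\times\R$ is in hand, surjectivity follows because the image is open, closed (by completeness and properness of the normal flow), and $\Hyp^3$ is connected. The remaining items are then essentially bookkeeping with the explicit solution $\lambda_i(r)=\tanh(r+c_i)$ of the Riccati equation, and present no real obstacle.
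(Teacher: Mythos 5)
Items \ref{item1}--\ref{item3} follow essentially the paper's approach (lift to $\Hyp^3$, normal exponential map, cite the classical global injectivity result, then the explicit solution $\lambda_i(r)=\tanh(r+c_i)$ of the Riccati equation). Your sign conventions and the use of the Riccati equation match the formula $\lambda_i(p,r)=\tanh(\mu_i(p)+r)$ in the paper, and the monotonicity argument for item \ref{item3} is the same.

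There is, however, a genuine gap in item \ref{item4}. Your ``corrected'' definition
$$f_-(r)=\tfrac12\bigl(\tanh(r+\operatorname{arctanh}(\textstyle\min_\Sigma\mu_-))+\tanh(r+\operatorname{arctanh}(\textstyle\min_\Sigma\mu_+))\bigr)$$
does give the correct lower bound on the mean curvature of $\zeta(\Sigma\times\{r\})$, but it does \emph{not} satisfy $f_-(0)=H$ unless the surface is umbilic. Indeed, $f_-(0)=\tfrac12(\min_p\mu_-(p)+\min_p\mu_+(p))$, and even though $\tfrac12(\mu_-(p)+\mu_+(p))=H$ holds at every single point, the minima of $\mu_-$ and $\mu_+$ over $\Sigma$ are generically attained at \emph{different} points. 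In fact, since $\mu_-(p)+\mu_+(p)=2H$ is constant, $\mu_-$ attains its minimum exactly where $\mu_+$ attains its maximum, so $\min\mu_-+\min\mu_+<2H$ strictly whenever the surface is non-umbilic. The same issue affects $f_+$. So your $f_\pm$ violate the normalization $f_\pm(0)=H$ that item \ref{item4} demands, and this normalization is what makes the proposition usable downstream (it is what forces $f_\pm^{-1}(H')$ to separate the surfaces $S_{H'}$ from $S_H$ in Lemma \ref{lemma:disjoint}). The paper avoids this by a different construction: set $H_p(r)=\tfrac12(\lambda_1(p,r)+\lambda_2(p,r))$, note that $\partial_r H_p(r)\in(0,1)$ for all $(p,r)$, define $g_\pm(r)$ to be the pointwise $\min$ and $\max$ over $p\in\Sigma$ of $\partial_r H_p(r)$ (continuous by compactness of $\Sigma$), and then let $f_\pm(r)=H+\int_0^r g_\pm(s)\,ds$. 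This automatically gives $f_\pm(0)=H$, $f'_\pm>0$, and the sandwich $f_-(r)\leq H_p(r)\leq f_+(r)$ by integrating the derivative bound. You should replace your explicit $\tanh$-based formulas with this integral construction.
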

We will refer to the function $r:M\to\R$ as the \emph{signed distance} from the embedded surface $S=\iota(\Sigma)$. 
\begin{proof}
Points  \ref{item1} and  \ref{item2} are well known. For point \ref{item1}, see \cite{epstein} or \cite[Proposition 4.15, Remark 4.22]{elemamseppi}. Let $\widetilde S$ be the lift of $S=\iota(\Sigma)$ to the universal cover $\Hyp^3$. To show point \ref{item2}, the fundamental property is that $\widetilde S$ stays in the concave side of any tangent horosphere (see \cite[Lemma 4.11]{elemamseppi}), hence \emph{a fortiori} on the concave side of any tangent metric ball centered at a point $P$ outside $\widetilde S$. This implies that the geodesics orthogonal to $S$ are pairwise disjoint and form a global foliation in lines of $M$. Moreover, the distance from $S$ is realized along the orthogonal geodesic through $P$. Observe that if $S=\iota(\Sigma)$ has small principal curvatures, then all equidistant surfaces $\zeta(\Sigma\times\{r\})$ also have small principal curvatures (\cite[Chapter 3]{epstein} or \cite[Corollary 4.4]{elemamseppi}). Hence one can repeat the above argument replacing $S$ with $\zeta(\Sigma\times\{r\})$, and conclude \eqref{eq:distance} for all $r_1,r_2$.

To prove points \ref{item3} and \ref{item4}, observe that, with our convention on the mean curvature (see Section \ref{subsec:mean}), the principal curvatures $\lambda_1(r),\lambda_2(r)$ of the embedding $\iota_r:=\zeta(\cdot,r):\Sigma\to M$ at the point $p$ satisfy the formula:
\begin{equation}\label{eq:formula mean r}
\lambda_i(p,r)=\tanh(\mu_i(p)+r)~,
\end{equation}
which is monotone increasing in $r$, where $\lambda_i(p,0)=\tanh\mu_i(p)\in (-1,1)$. Since the mean curvature of $\iota_r$ at $p$ equals $(\lambda_1(p,r)+\lambda_2(p,r))/2$, it follows that it is larger than $H=(\lambda_1(p,0)+\lambda_2(p,0))/2$ if $r>0$ and smaller than $H$ if $r<0$, as claimed in point \ref{item3}.

More precisely, by a direct computation from Equation \eqref{eq:formula mean r} one checks that the derivative of the mean curvature function 
$$r\mapsto H_p(r)=\frac{1}{2}\left( \lambda_1(p,r)+ \lambda_2(p,r)\right)$$ takes value in $(0,1)$ for all $r$. If we fix $r$, using compactness of $\Sigma$ we can define the  functions
$$g_-(r_0)=\min_{p\in S}\left.\frac{d}{dr}\right|_{r=r_0}H_p(r)\qquad g_+(r_0)=\max_{p\in S}\left.\frac{d}{dr}\right|_{r=r_0}H_p(r)~.$$
Integrating $g_-$ and $g_+$, which are both positive everywhere, from $0$ to $r$, one obtains the functions $f_-$ and $f_+$ as in point \ref{item4}. We remark that $g_\pm$ are continuous, hence integrable: indeed, using continuity in $p$ and $r$ of the $r$-derivative of $H_{p}(r)$, we see that if $r_n\to r_\infty$, then a sequence $p_n\in \Sigma$ of minimum points of $(d/dr)H_\bullet(r_n)$ converges up to a subsequence to $p_\infty$, which is necessarily a minimum point of $(d/dr)H_\bullet(r_\infty)$. Hence $g_-(r_\infty)=\lim_n g_-(r_n)$, and analogously for $g_+$ by replacing minimum by maximum.
\end{proof}

\subsection{Maximum principle for CMC surfaces}

In this section we apply Proposition \ref{prop:small} and the geometric maximum principle for mean curvature to achieve two properties which will play a fundamental role in the proof of the foliation result, Theorem \ref{thm:foliation}.

\begin{prop}\label{prop:unique}
Let $M\cong\Sigma\times\R$ be a quasi-Fuchsian manifold and let $S_H$ and $S_H'$ be closed embedded CMC surfaces in $M$ homotopic to $\Sigma\times\{*\}$ with the same mean curvature $H\in (-1,1)$. If $S_H$ has small principal curvatures, then $S_H=S_H'$.
\end{prop}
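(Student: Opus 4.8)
The plan is to use the equidistant foliation associated to the surface $S_H$ with small principal curvatures, as furnished by Proposition \ref{prop:small}, together with a sweepout/maximum-principle comparison argument. First I would invoke point \ref{item2} of Proposition \ref{prop:small} to obtain the diffeomorphism $\zeta:\Sigma\times\R\to M$ with $\zeta(\cdot,0)=\iota$ parameterizing $S_H$, normalized so that $\zeta(\cdot,r)$ approaches $\partial_\infty^- M$ as $r\to+\infty$; by point \ref{item3}, the equidistant surface $S_r := \zeta(\Sigma\times\{r\})$ has mean curvature (with respect to the coherent normal) strictly greater than $H$ for $r>0$ and strictly less than $H$ for $r<0$. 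Since $S_H'$ is a \emph{closed} embedded surface homotopic to $\Sigma\times\{*\}$, it is contained in a compact region of $M$, hence there is a largest $r_+$ and a smallest $r_-$ such that $S_{r_+}$ and $S_{r_-}$ still meet (the closure of) $S_H'$; equivalently, $S_H'$ lies in the region bounded by $S_{r_-}$ and $S_{r_+}$ and is tangent to at least one of them. (Here one uses that the $S_r$ foliate all of $M$, so the function $r$ restricted to $S_H'$ attains a maximum and a minimum.)

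Next comes the maximum-principle comparison. Suppose the maximum of the signed-distance function $r|_{S_H'}$ is attained at a point $q$ with value $r_+$; then $S_H'$ is internally tangent to the equidistant surface $S_{r_+}$ at $q$, lying on the appropriate side. If $r_+>0$, the leaf $S_{r_+}$ has mean curvature strictly bigger than $H$ at $q$, while $S_H'$ has mean curvature exactly $H$ there, and the two surfaces are tangent with a consistent choice of normal and with $S_H'$ on the side into which $S_{r_+}$ curves less — this contradicts the geometric maximum principle (comparing two surfaces tangent at a point, the one on the inside cannot have strictly smaller mean curvature). Hence $r_+\leq 0$. Symmetrically, looking at the minimum of $r|_{S_H'}$ and using that for $r<0$ the leaves $S_r$ have mean curvature strictly less than $H$, one gets $r_-\geq 0$. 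Therefore $r_-=r_+=0$, which forces $S_H'\subseteq S_0=S_H$, and since both are closed embedded surfaces homotopic to $\Sigma\times\{*\}$ (hence each separating and of the same homotopy type), $S_H'=S_H$.

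The main obstacle, and the step requiring the most care, is making the tangency/side-of comparison precise, in particular checking that at the extremal point $q$ the two surfaces $S_H'$ and $S_{r_+}$ are tangent \emph{and} that $S_H'$ lies locally on the correct side so that the geometric maximum principle applies with the right inequality direction. This amounts to a careful bookkeeping of orientations: the normal to $S_H'$ is fixed by Remark \ref{rmk:convention}, the normal to the equidistant leaves $S_r$ is the one in the direction of increasing $r$ (pointing toward $\partial_\infty^- M$), and one must verify these agree at $q$ and that ``$S_H'$ attains the max of $r$ at $q$'' translates into ``$S_H'$ is on the concave side of $S_{r_+}$'' in the sense needed. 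Once this sign is pinned down, the contradiction with \cite[geometric maximum principle]{zbMATH05046844} (or the standard reference for the interior maximum principle for the mean curvature operator) is immediate, and no further analysis is needed. Note that only point \ref{item3} of Proposition \ref{prop:small} (not even the finer point \ref{item4}) is needed for this argument; the hypothesis that $S_H$ has small principal curvatures is used precisely to guarantee the existence of the global equidistant foliation against which $S_H'$ is compared.
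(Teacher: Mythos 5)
Your proposal reproduces the paper's proof almost verbatim: invoke the equidistant foliation from Proposition \ref{prop:small}\ref{item2} for the small-curvature surface $S_H$, restrict the signed distance $r$ to the compact surface $S_H'$ to get a maximum $r_{\max}$ and minimum $r_{\min}$, apply the geometric maximum principle at the tangency points together with item \ref{item3} to conclude $r_{\max}\le 0\le r_{\min}$, hence $r\equiv 0$ and $S_H'=S_H$. One small correction to your self-flagged bookkeeping: under Remark \ref{rmk:convention} the normal to every leaf $\zeta(\Sigma\times\{r\})$, like the normal to $S_H'$, points toward $\partial_\infty^+M$, i.e.\ in the direction of \emph{decreasing} $r$ (this is why the paper phrases it as ``minus the gradient of $r$''), not increasing $r$ as you wrote; with that flip the sign in the maximum-principle comparison comes out as you intended.
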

\begin{proof}
Let $r$ be the signed distance function from $S_H$, given by the diffeomorphism $\zeta$ as in Proposition \ref{prop:small}, applied to the inclusion $\iota$ of $\Sigma$ with image $S_H$. Since $S_H'$ is compact, the restriction of $r$ to $S_H'$ has a maximum $r_{\max}=r(p_{\max})$ and a minimum $r_{\min}=r(p_{\min})$. By Remark \ref{rmk:convention}, the normal vector to $S_H'$ coincides with minus the gradient of the function $r$ at the points $p_{\min}$ and $p_{\max}$.

This implies that $S_H'$ is tangent to the equidistant surface $\zeta(\Sigma\times\{r_{\max}\})$, and entirely contained in the side $\{r\leq r_{\max}\}$, towards which the normal vector is pointing by our convention. By the geometric maximum principle, the mean curvature of $S_H'$, which equals $H$, is larger than the mean curvature of  $\zeta(\Sigma\times\{r_{\max}\})$ at $p_{\max}$. By item \ref{item3} of Proposition \ref{prop:small}, $r_{\max}\leq 0$. Repeating the argument for the minimum point, one obtains $r_{\min}\geq 0$. Hence $r\equiv 0$ on $S_H'$. Since both $S_H$ and $S_H'$ are closed embedded surfaces, they must coincide. 
\end{proof}

Let us now consider the case of two CMC surfaces with different values of the mean curvature.

\begin{lemma}\label{lemma:disjoint}
Let $M\cong\Sigma\times\R$ be a quasi-Fuchsian manifold and let $S_H$ and $S_{H'}$ be closed embedded CMC surfaces in $M$ homotopic to $\Sigma\times\{*\}$, with mean curvature $H$ and $H'$ respectively, for $H\neq H'$. If $S_H$ has small principal curvatures, then $S_H$ and $S_{H'}$ are disjoint, and moreover the signed distance of every point of $S_{H'}$ from $S_H$ is between $f_+^{-1}(H')$ and $f_-^{-1}(H')$, where $f_\pm$ are the increasing functions introduced in Proposition \ref{prop:small}.
\end{lemma}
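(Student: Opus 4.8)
The plan is to combine Proposition \ref{prop:small} with the geometric maximum principle, mimicking the argument of Proposition \ref{prop:unique} but keeping track of the precise value of the mean curvature along equidistant surfaces. As before, let $r$ denote the signed distance function from $S_H$, associated to the diffeomorphism $\zeta:\Sigma\times\R\to M$ of Proposition \ref{prop:small}, normalized so that $\zeta(\cdot,r)$ approaches $\partial^-_\infty M$ as $r\to+\infty$. Since $S_{H'}$ is compact, the restriction of $r$ to $S_{H'}$ attains a maximum $r_{\max}=r(p_{\max})$ and a minimum $r_{\min}=r(p_{\min})$. At $p_{\max}$, the surface $S_{H'}$ is tangent to the equidistant surface $\zeta(\Sigma\times\{r_{\max}\})$ and lies entirely in the region $\{r\le r_{\max}\}$; by Remark \ref{rmk:convention} the normal vectors agree at $p_{\max}$ and point into this region. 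By the geometric maximum principle, $H'=\mathcal H(S_{H'})(p_{\max})\ge \mathcal H(\zeta(\Sigma\times\{r_{\max}\}))(p_{\max})$, and by item \ref{item4} of Proposition \ref{prop:small} the latter is $\ge f_-(r_{\max})$. Hence $H'\ge f_-(r_{\max})$, i.e.\ $r_{\max}\le f_-^{-1}(H')$, using that $f_-$ is increasing. Symmetrically, at $p_{\min}$ the surface $S_{H'}$ is tangent to $\zeta(\Sigma\times\{r_{\min}\})$ from the side $\{r\ge r_{\min}\}$, so the maximum principle gives $H'\le \mathcal H(\zeta(\Sigma\times\{r_{\min}\}))(p_{\min})\le f_+(r_{\min})$, whence $r_{\min}\ge f_+^{-1}(H')$.

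**Next**, I would assemble these inequalities: for every point $q\in S_{H'}$,
\[
f_+^{-1}(H')\le r_{\min}\le r(q)\le r_{\max}\le f_-^{-1}(H')~,
\]
which is exactly the claimed bound on the signed distance. To deduce disjointness from $S_H=\zeta(\Sigma\times\{0\})$, note that since $H\ne H'$ and $f_\pm(0)=H$, the values $f_+^{-1}(H')$ and $f_-^{-1}(H')$ are both nonzero and of the same sign (both positive if $H'<H$, both negative if $H'>H$, using that $f_\pm$ are strictly increasing). Hence $0\notin[f_+^{-1}(H'),f_-^{-1}(H')]$, so $r$ is nowhere zero on $S_{H'}$, which means $S_{H'}\cap S_H=\emptyset$.

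**I do not expect a serious obstacle here**; the only points requiring a little care are the correct orientation conventions (so that at $p_{\max}$ the surface $S_{H'}$ really lies on the side the normal points to, making the maximum principle applicable with the stated sign) and the monotonicity of $f_\pm$, which lets us invert them and guarantees $f_+^{-1}(H')$ and $f_-^{-1}(H')$ have the same sign when $H'\ne H$. Both are provided by Remark \ref{rmk:convention} and Proposition \ref{prop:small} respectively, so the argument is essentially a bookkeeping refinement of the proof of Proposition \ref{prop:unique}.
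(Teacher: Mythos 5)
Your proof is correct and follows essentially the same approach as the paper's: take the max and min of the signed distance $r$ on $S_{H'}$, apply the geometric maximum principle at the tangency points against the equidistant surfaces, and use item \ref{item4} of Proposition \ref{prop:small} to obtain $f_-(r_{\max})\le H'\le f_+(r_{\min})$, which gives the stated bounds; you also spell out the disjointness deduction, which the paper leaves implicit. One small slip in a parenthetical: since $f_\pm$ are increasing with $f_\pm(0)=H$, the values $f_\pm^{-1}(H')$ are both \emph{positive} when $H'>H$ and both \emph{negative} when $H'<H$ (you have the two cases reversed), but this does not affect the conclusion $0\notin[f_+^{-1}(H'),f_-^{-1}(H')]$.
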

\begin{proof}
The proof is very similar to Proposition \ref{prop:unique}. Suppose $H'>H$, the other case being analogous. Consider the restriction to $S_{H'}$ of the signed distance function $r$ with respect to $S_H$. This functions admits a minimum $r_{\min}=r(p_{\min})$ and a maximum $r_{\max}=r(p_{\max})$. Hence $S_{H'}$ is tangent to $\zeta(\Sigma\times\{r_{\min}\})$ at $p_{\min}$ and to $\zeta(\Sigma\times\{r_{\max}\})$ at $p_{\max}$, and contained in the region $\{r_{\min}\leq r\leq r_{\max}\}$ between the two. The geometric maximum principle together with item \ref{item4} of Proposition \ref{prop:small} then implies that 
$$f_-(r_{\max})\leq H'\leq f_+(r_{\min})~.$$
This implies that the restriction of $r$ to $S_{H'}$ is at least $r_{\min}\geq f_+^{-1}(H')$, and at most $r_{\max}\leq f_-^{-1}(H')$, as claimed.
\end{proof}

\subsection{Proof of Theorem \ref{thm:foliation}}
Let us now conclude the proof of the smooth monotone foliation result, by putting together all the ingredients. The aim is showing that, for $M$ a quasi-Fuchsian manifold in a suitable neighbourhood $U$ of the Fuchsian locus $\F(S)$, there exists a diffeomorphism between $\Sigma\times (-1,1)$ and $M$ such that, restricted to each slice $\Sigma\times \{H\}$, is an embedding of constant mean curvature $H$. The existence of such CMC surfaces has been proved in Theorem \ref{thm:existence}, so now the goal (up to choosing a smaller neighbourhood $U$ of $\F(\Sigma)$) is achieving the diffeomorphism, thus proving the smooth foliation part.

\begin{proof}[Proof of Theorem \ref{thm:foliation}]
Recall that the proof of Theorem \ref{thm:existence} produces, for every $m$ in $\F(\Sigma)$, a neighbourhood $U$ in $\QF(\Sigma)$ as the intersection 
\begin{equation}\label{eq:finite subcover}
U:=\widehat U\cap U_{c_1}\cap\ldots\cap U_{c_N}~,
\end{equation}
where $\widehat U$ is a neighbourhood of $m$ in which the ends are monotonically foliated by CMC surfaces with mean curvature ranging in $(-1,-1+\widehat\epsilon)\cup (1-\widehat\epsilon,1)$, and the $U_{c_i}$ are neighbourhoods of $m$ obtained from the family $U_{H_0}$ (by extracting a finite cover of the interval $[-1,1]$). Hence for every $i$, in every quasi-Fuchsian manifold inside $U_{c_i}$ we have existence of CMC surfaces of mean curvature ranging in $(c_i-\epsilon_{c_i},c_i+\epsilon_{c_i})$.

Now, let us provide a couple of preliminary observations. First, from Theorem \ref{thm:existence_compact}, we can assume that the $U_{H_0}$ have the property that the CMC surfaces of mean curvature $(H_0-\epsilon_{H_0},H_0+\epsilon_{H_0})$ have small principal curvatures. (In particular, they are embedded by \ref{item1} of Proposition \ref{prop:small}.) Hence in \eqref{eq:finite subcover}, we can assume that all the $U_{c_i}$ have this property. Second, it is harmless to assume that $c_1<\ldots<c_N$ and that the corresponding intervals, namely $(-1,-1+\widehat\epsilon),(c_1-\epsilon_{c_1},c_1+\epsilon_{c_1}), \ldots, (c_N-\epsilon_{c_N},c_N+\epsilon_{c_N}), (1-\widehat\epsilon,1)$ only intersect in pairs (that is, each interval intersects the previous and the next one, and no other), up to choosing smaller $\epsilon$'s.  

Having made these assumptions, using Theorem \ref{thm:existence_compact} we can construct, for any quasi-Fuchsian manifold $M$ in $U$, smooth maps 
$$\xi_{i}:\Sigma\times (c_i-\epsilon_{c_i},c_i+\epsilon_{c_i})\to M$$ 
having the property that $\xi_{c_i}(\Sigma\times\{H\})$ is an embedded CMC surface of mean curvature $H$. Similarly in the ends, from Theorem  \ref{thm:foliation_ends} we get smooth maps 
$$\xi_0:\Sigma\times (-1,-1+\widehat\epsilon)\to M\qquad\text{and}\qquad \xi_{N+1}:\Sigma\times (1-\widehat\epsilon,1)\to M$$
satisfying the analogous property.

By our previous assumption, all the $\xi_{i}(\Sigma\times\{H\})$ have small principal curvatures, if $i\in\{1,\ldots,N\}$. Hence by Proposition \ref{prop:unique}, we have $\xi_{i}(\Sigma\times\{H\})=\xi_{i'}(\Sigma\times\{H\})$ for every $i,i'\in\{0,\ldots,N+1\}$. Using our other assumption, namely that only consecutive intervals overlap, we can iteratively precompose each $\xi_i$, starting from $\xi_1$, with smooth diffeomorphisms of the source that preserve each slice $\Sigma\times\{H\}$, so that $\xi_i(\cdot,H)=\xi_{i+1}(\cdot,H)$ as long as $H$ is in the intersection of the corresponding intervals. Hence we can glue together the $\xi_i$'s to obtain a smooth map 
$$\xi:\Sigma\times(-1,1)\to M$$
such that $\xi(\Sigma\times\{H\})$ is an embedding of a CMC surface with mean curvature $H$, which we denote by $S_H$. 

We claim that $\xi$ is injective. Indeed it is injective on every slice $\Sigma\times\{H\}$, hence it suffices to show that the images of different slices are disjoint. We distinguish three cases. If $H$ is in one of the intervals $(c_i-\epsilon_{c_i},c_i+\epsilon_{c_i})$, then $S_H$ is disjoint by any $S_{H'}$ for $H'\neq H$ by  Lemma \ref{lemma:disjoint}. If $H\in (-1,-1+\widehat\epsilon)$ and $H'\in (1-\widehat\epsilon,1)$, then $S_H$ and $S_{H'}$ are disjoint because the two neighbourhoods of the ends are disjoint. Finally, if both $H$ and $H'$ are in $(-1,-1+\widehat\epsilon)$ or in $(1-\widehat\epsilon,1)$, then $S_H$ and $S_{H'}$ are disjoint by Theorem  \ref{thm:foliation_ends}.

Moreover $\xi$ is surjective by the intermediate value theorem, because it is a diffeomorphism onto a neighbourhood of the ends when restricted to $\Sigma\times (-1,-1+\widehat\epsilon)$ and $\Sigma\times (1-\widehat\epsilon,1)$ by Theorem \ref{thm:foliation_ends}. Hence $\xi$ is a homeomorphism. By the inverse function theorem, to prove that it is a diffeomorphism, and thus conclude  Theorem  \ref{thm:foliation}, it suffices to show that its differential is injective at every $(p,H)$ with $H$ in one of the intervals $(c_i-\epsilon_{c_i},c_i+\epsilon_{c_i})$. 

For this purpose, we know already that the differential of $\xi$ is injective when restricted to $T_p \Sigma\subset T_{(p,H)}(\Sigma\times(-1,1))$, and $d\xi(T_p\Sigma)$ is the tangent space to the CMC surface which we will call $S_H$. Hence it suffices to show that $d\xi(\partial/\partial H)$ is a nonzero vector transverse to $d\xi_{(p,H)}(T_p\Sigma)=T_{\xi(p,H)}S_H$. Here we use that $S_H$ has small principal curvatures, and the equidistant foliation from Proposition \ref{prop:small}. Indeed, it is sufficient to show that $d(r\circ\xi)(\partial/\partial H)$ does not vanish, where $r$ is the signed distance from $S_H$ provided by Proposition \ref{prop:small}. But the last part of Lemma \ref{lemma:disjoint} tells us that  $r\circ\xi$ (which is a differentiable function) is larger than the function $f_+^{-1}$, whose derivative is positive. Hence 
$$\left.\frac{d}{dt}\right|_{t=H}(r\circ\xi)(p,t)>0~.$$
This concludes the proof.
\end{proof}

\bibliographystyle{alpha}
\bibliographystyle{ieeetr}
\bibliography{cmcbiblio.bib}

\end{document}